\newcommand*{\id}{{\mathrm{id}}}
\newcommand*{\la}{{\langle}}                                                  
\newcommand*{\ra}{{\rangle}}                                                  
\newcommand*{\R}{{\mathbb R}}
\newcommand*{\Hb}{{\mathbb H}}                                                
\newcommand*{\Vb}{{\mathbb V}}
\newcommand*{\Sb}{{\mathbb S}}
\newcommand*{\Qb}{{\mathbb Q}}
\newcommand*{\pa}{{\partial}}
\newcommand*{\rd}{{\mathrm d}}
\newcommand*{\gf}{{\mathfrak g}}  
\newcommand*{\pf}{{\mathfrak p}}
\newcommand*{\ff}{{\mathfrak f}}
\newcommand*{\hf}{{\mathfrak h}}
\newcommand*{\lb}{\langle}
\newcommand*{\rb}{\rangle}
\newtheorem{theorem}{Theorem}
\newtheorem{lemma}[theorem]{Lemma}
\newdefinition{definition}{Definition}
\newdefinition{prescription}{Prescription}
\newdefinition{remark}{Remark}
\newdefinition{example}{Example}
\newdefinition{procedure}{Procedure}
\newproof{proof}{Proof}
\journal{...}
\begin{document}
\begin{frontmatter}
\title{Coagulation, non-associative algebras and binary trees} 
\author{Simon~J.A.~Malham\fnref{fn1}}    
\ead{S.J.A.Malham@hw.ac.uk} 
\fntext[fn1]{SJAM was supported by an EPSRC Mathematical Sciences Small Grant EP/X018784/1}
\affiliation{organisation={Maxwell Institute for Mathematical Sciences,        
and School of Mathematical and Computer Sciences},   
addressline={Heriot-Watt University}, postcode={EH14~4AS}, city={Edinburgh}, country={UK}}

\begin{abstract}
  We consider the classical Smoluchowski coagulation equation with a general frequency kernel.
  We show that there exists a natural deterministic solution expansion in the non-associative algebra generated
  by the convolution product of the coalescence term. The non-associative solution expansion
  is equivalently represented by binary trees. We demonstrate that the existence of such solutions
  corresponds to establishing the compatibility of two binary-tree generating procedures, by:
  (i) grafting together the roots of all pairs of order-compatibile trees at preceding orders, or
  (ii) attaching binary branches to all free branches of trees at the previous order.
  We then show that the solution represents a linearised flow, and also establish
  a new numerical simulation method based on truncation of the solution tree expansion
  and approximating the integral terms at each order by fast Fourier transform. 
  In particular, for general separable frequency kernels, the complexity of the method
  is linear-loglinear in the number of spatial modes/nodes. 
\end{abstract}
\begin{keyword} Smoluchowski coagulation \sep non-associative algebras \sep binary trees \end{keyword}
\end{frontmatter}

\section{Introduction}\label{sec:intro}
Herein we consider the classical Smoluchowski coagulation equation with a general frequency kernel.
We show there exists a natural deterministic solution expansion in a non-associative algebra equivalent to an
expansion on binary trees. The algebra product is generated by the coagulation convolution term
and is non-associative for general frequency kernels. We demonstrate how the existence of
solutions to the coagulation equation is equivalent to the compatibility                              
of generating binary trees at each order, by the two following natural procedures, by:
(i) grafting together the roots of all pairs of order-compatible trees at preceding orders, and
(ii) attaching binary branches to all the possible free branches of trees at the previous order.
We examine the classical constant, additive and multiplicative frequency kernels as special cases within the binary tree context.
Lastly, we also establish that the flow has a linearised flow formulation,
and outline how the flow can be approximated to develop new numerical simulation methods.

The Smoluchowski coagulation equation has the form,
\begin{align}\label{eq:Smoluchowski}
  \pa_tg(x;t)=&\;\tfrac12\int_0^x K(x-y,y)g(x-y;t)g(y;t)\,\rd y\nonumber\\
  &\;-g(x;t)\int_0^\infty K(x,y)g(y;t)\,\rd y,
\end{align}
where $g=g(x;t)$ denotes the density of molecular clusters of mass $x$, and $K=K(x,y)$ is the given symmetric frequency kernel.
We assume the initial data is $g_0$ so $g(x;0)=g_0(x)$. All the variables concerned here are positive.
The exact form of the frequency kernel depends on the application considered.
Three special cases are often the focus of investigation, namely the constant, $K=1$, additive, $K=x+y$, and multiplicative, $K=xy$, kernel cases.
This is because the Smoluchowski equation can be solved explicitly for these cases. See Aldous~\cite{Aldous} and Menon and Pego~\cite{MP} for more details.
In the multiplicative kernel case a phase transition occurs at a gelation time, beyond which the solution can be extended, see Leyvraz and Tschudi~\cite{LT}. 
Herein, we focus on the general $K=K(x,y)$ kernel case. It is usual to consider 
the Laplace or Bernstein transform of the Smoluchowski's equation; see Menon and Pego~\cite{MP}.
For the moment let us consider a general linear transformation $\mathcal H$ analogous to such transforms.
We define the general linear transformation of a function $g=g(x)$ with support on $[0,\infty)$ by, 
\begin{equation}\label{eq:generaltransform}
\bigl(\mathcal Hg\bigr)(s;t)\coloneqq\int_0^\infty h(s,x)\,g(x)\,\rd x,
\end{equation}
where $0\leqslant h\leqslant 1$. If $h\coloneqq\mathrm{e}^{-sx}$ then $\mathcal H$ is the Laplace transform.
If $h(s,x)\coloneqq1-\mathrm{e}^{-sx}$, then $\mathcal H$ is the Bernstein transform. 
It is straightforward to show that if we consider the general transform of the Smoluchowski equation~\eqref{eq:Smoluchowski}
and $\gf\coloneqq\mathcal H\,g$, then $\gf=\gf(s;t)$ satisfies, 
\begin{equation}\label{eq:evol}
  \pa_t\gf(s;t)=\iint g(y;t)\bigl(H(s,y,z)K(y,z)\bigr)g(z;t)\,\rd y\,\rd z,
\end{equation}
where the double integral shown is over $[0,\infty)^2$, and we set $H(s,y,z)\coloneqq\frac12(h(s,y+z)-h(s,y)-h(s,z))$.
If we set $g=\mathcal H^{-1}\gf$ in both places where $g$ appears on the right in~\eqref{eq:evol},
the resulting vector field involves a quadruple integral of a quadratic form involving $\gf$ with a weight $H(s,y,z)K(y,z)v(y,\cdot)v(z,\cdot)$.
Here $v$ is the kernel of the operator $\mathcal H^{-1}$. So, for example, in the instance of the Laplace transform $v(s,x)=\mathrm{e}^{xs}$,
and the quadruple integral additionally involves two Bromwich contour integrals. The resulting evolution equation 
for $\gf$ has the form, 
\begin{equation}\label{eq:quadnonassoc}
\pa_t\gf=\gf\star\gf, 
\end{equation} 
where the product `$\star$' is based on the quadratic form of the vector field in the
evolution equation~\eqref{eq:evol} with the quadruple integral. There are many ways to
represent Smoluchowski's coagulation equation as an abstract evolution equation of the form~\eqref{eq:quadnonassoc}
with the quadratic vector field shown. 
We consider another analogous representation for Smoluchowski's coagulation equation in Section~\ref{sec:coagulation}.
In all of these representations which have the abstract form~\eqref{eq:quadnonassoc}, the product `$\star$' is in general non-associative.
That this is the case is straightforwardly checked. The constant frequency kernel case is the exception, the product `$\star$' is associative in
this singular instance. Naturally the form of $\mathfrak f\star\gf$ for general transforms of any pair of functions $f$ and $g$
with support on $[0,\infty)$, for any of the representations mentioned, is straightforwardly implied.

Hence, with `$\star$' a non-associative product, our goal is to solve the abstract equation~\eqref{eq:quadnonassoc}, or equivalently,
\begin{equation}\label{eq:abstract}
\gf(t)=\xi+\int_0^t\gf(t^\prime)\star\gf(t^\prime)\,\rd t^\prime, 
\end{equation} 
for $\gf=\gf(s;t)$, where $\xi=\xi(s)$ is the general transform of the data $g_0=g_0(x)$. 
The solution can be formally derived by iteration. This generates the solution expansion,
\begin{align}
  \gf=&\;\xi+t\,\xi\star\xi+\tfrac12t^2\bigl(\xi\star(\xi\star\xi)+(\xi\star\xi)\star\xi\bigr)\nonumber\\
  &\;+\tfrac16t^3\Bigl(\xi\star\bigl(\xi\star(\xi\star\xi)\bigr)+\xi\star\bigl((\xi\star\xi)\star\xi\bigr)\nonumber\\
  &\;\qquad+2\cdot(\xi\star\xi)\star(\xi\star\xi)+\bigl(\xi\star(\xi\star\xi)\bigr)\star\xi\nonumber\\
  &\;\qquad+\bigl((\xi\star\xi)\star\xi\bigr)\star\xi\Bigr)+\mathcal{O}(t^4). \label{eq:formalsoln}
\end{align}
In the context of Smoluchowski's equation, the product `$\star$', though non-associative, is commutative.
This means that we can simplify the expansion by combining some like terms. For example the terms
of order $t^2$ combine, as do all the terms of order $t^3$ except the symmetric term with the real factor `$2$'.
We discuss such symmetries in detail in Section~\ref{sec:coagulation}. However, in general we assume the product `$\star$'
is non-commutative as we can complete all our analysis in this more general context, and specialise to
the commutative context as and when we require. Keeping track of the bracketed terms due to the non-associativity
at higher order is most easily accomplished using the representation of the terms in the expansion by rooted planar binary trees.
Besides the empty tree $\emptyset$, the first set of rooted planar binary trees, up to and including grade $3$, are,\medskip

{\tiny \begin{forest} for tree={grow'=90, parent anchor=center,child anchor=center} [] \path[fill=black] (.anchor) circle[radius=1.5pt]; \end{forest}},
{\tiny \begin{forest} for tree={grow'=90, parent anchor=center,child anchor=center, l=0cm,inner ysep=0cm,edge+=thick} [[][]] \end{forest}},
{\tiny \begin{forest} for tree={grow'=90, parent anchor=center,child anchor=center, l=0cm,inner ysep=0cm,edge+=thick} [[][[][]]] \end{forest}},
{\tiny \begin{forest} for tree={grow'=90, parent anchor=center,child anchor=center, l=0cm,inner ysep=0cm,edge+=thick} [[[][]][]] \end{forest}},
{\tiny \begin{forest} for tree={grow'=90, parent anchor=center,child anchor=center, l=0cm,inner ysep=0cm,edge+=thick} [[][[][[][]]]] \end{forest}},
{\tiny \begin{forest} for tree={grow'=90, parent anchor=center,child anchor=center, l=0cm,inner ysep=0cm,edge+=thick} [[][[[][]][]]] \end{forest}},
{\tiny \begin{forest} for tree={grow'=90, parent anchor=center,child anchor=center, l=0cm,inner ysep=0cm,edge+=thick} [[[][]][[][]]] \end{forest}},
{\tiny \begin{forest} for tree={grow'=90, parent anchor=center,child anchor=center, l=0cm,inner ysep=0cm,edge+=thick} [[[][[][]]][]] \end{forest}},
{\tiny \begin{forest} for tree={grow'=90, parent anchor=center,child anchor=center, l=0cm,inner ysep=0cm,edge+=thick} [[[[][]][]][]] \end{forest}}.\medskip

\noindent Respectively, in order, these trees encode the successive terms in our formal solution expansion~\eqref{eq:formalsoln} for $\gf$.
Each vertex indicates a `$\star$' product while each free branch indicates the instance of a $\xi$ factor. Hence the first term `$\xi$' in~\eqref{eq:formalsoln}
is represented by the `{\tiny \begin{forest} for tree={grow'=90, parent anchor=center,child anchor=center} [] \path[fill=black] (.anchor) circle[radius=1.5pt]; \end{forest}}',
which, say, we express as $\xi$({\tiny \begin{forest} for tree={grow'=90, parent anchor=center,child anchor=center} [] \path[fill=black] (.anchor) circle[radius=1.5pt]; \end{forest}}).
The second term `$\xi\star\xi$' is represented by 
`\raisebox{-1pt}{{\tiny \begin{forest} for tree={grow'=90, parent anchor=center,child anchor=center, l=0cm,inner ysep=0cm,edge+=thick} [[][]] \end{forest}}}'
and we express this using the shorthand notation
$\xi$(\raisebox{-1pt}{{\tiny \begin{forest} for tree={grow'=90, parent anchor=center,child anchor=center, l=0cm,inner ysep=0cm,edge+=thick} [[][]] \end{forest}}}),
and so forth. Indeed a representation for $\gf$ in \eqref{eq:formalsoln} is thus,
\begin{equation*}
  \gf=\xi({\tiny \begin{forest} for tree={grow'=90, parent anchor=center,child anchor=center} [] \path[fill=black] (.anchor) circle[radius=1.5pt]; \end{forest}})
  +t\,\xi(\raisebox{-1pt}{{\tiny \begin{forest} for tree={grow'=90, parent anchor=center,child anchor=center, l=0cm,inner ysep=0cm,edge+=thick} [[][]] \end{forest}}})
  +\tfrac12t^2\biggl(\xi\Bigl(\raisebox{-4pt}{{\tiny \begin{forest} for tree={grow'=90, parent anchor=center,child anchor=center, l=0cm,inner ysep=0cm,edge+=thick} [[][[][]]] \end{forest}}}\Bigr)
  +\xi\Bigl(\raisebox{-4pt}{{\tiny \begin{forest} for tree={grow'=90, parent anchor=center,child anchor=center, l=0cm,inner ysep=0cm,edge+=thick} [[[][]][]] \end{forest}}}\Bigr)\biggr)
+\mathcal{O}(t^3). 
\end{equation*}
We observe, if we drop the $\xi$'s, we can represent this solution expansion for $\gf$ as an expansion in the algebra of planar binary trees over the field $\R$.

The purpose of this paper is to:
\begin{enumerate}
\item[(i)] Prove the full tree expansion \eqref{eq:formalsoln} is the solution to the abstract non-associative and non-commutative quadratic differential equation
  \eqref{eq:quadnonassoc} with data $\xi$. The proof is equivalent to establishing the compatability, with mutliplicity, of the grafting and branching 
  tree generating procedures;  
\item[(ii)] Give closed form expressions for the solution;
\item[(iii)] Show that the solution can be expressed as a linearised flow;
\item[(iv)] Establish a practical numerical simulation method for the Smoluchowksi coagulation equation, with a general frequency kernel,
   to evaluate the solution at any time $t$, up to gelation. The method is particularly efficient for separable frequency kernels.
\end{enumerate}
There are many comprehensive surveys on Smoluchowksi coagulation, more recent examples include Aldous~\cite{Aldous}, giving a general overview,
da Costa~\cite{daC}, focusing on deterministic aspects and Hammond~\cite{Hammond}, focusing on stochastic aspects. Conditions for well-posedness
of the Smoluchowski equation, the gelation time, as well as the phenomenon of instantaneous gelation can be found therein, as well as in Dubovskii~\cite{Dubovskii}.
The work of Menon and Pego~\cite{MP} and Iyer~\textit{et al.\/} \cite{ILP} in particular, establishes a general rigorous foundation for
the Smoluchowski equation and its extension to multiple mergers.
On the theory of differential equations in the context of non-associative algebras, early work includes that by Markus~\cite{Markus},
as well as R\"ohrl~\cite{Rohrl}.

The algebra of rooted planar trees is exceedingly rich and has an illustrious history dating back to Cayley. More recently, in the work of Butcher~\cite{Butcher},
they arose in the context of Runge--Kutta methods in numerical analysis in the form of the Butcher group and in the work of Connes and Kreimer~\cite{CK}
on perturbative quantum field theory in the form of a Hopf algebra of trees. These two objects are equivalent, as demonsrtated by Brouder~\cite{Brouder}. 
A Hopf algebra is an algebra endowed with a compatabile co-product, and an antipode, see Foissy~\cite{Foissy} for an introduction.
In the algebra context, the Hopf algebra of planar binary trees was established by Loday and Ronco~\cite{LR}. 
Also see Hairer \textit{et al.\/} \cite{HLW} for a detailed account of the use of trees in numerical analysis.
The connection between Smoluchowski coagulation and binary trees is not new, and indeed binary trees appear, for example,
in Aldous~\cite{Aldous} in the context of Galton--Watson processes, in the combinatorial approach in Spouge~\cite{Spouge},
and in Sheth and Pitman~\cite{ShethPitman} as merger history trees. Also see Marckert and Wang~\cite{MarW}.

Smoluchowski's coagulation equation has an extremely wide range of application. Classically, it is a model for 
polymerisation, aerosols, clouds/smog, clustering of stars and galaxies, schooling and flocking;
see Aldous~\cite{Aldous}. Recently it has been used as a model for genealogy, see Lambert and Schertzer~\cite{LS};
coarsening, see Gallay and Mielke~\cite{GM}; nanostructures on substrates such as ripening or `island coarsening', see Stoldt \textit{et al.} \cite{SJTCE},
growth and morphology of nanocrystals, see Woehl \textit{et al.} \cite{WPEA} and Kaganer \textit{et al.\/} \cite{KPS},
epitaxial $\text{Si}_{1-\alpha}\text{Ge}_{\alpha}/\text{Si}(001)$ islands, see Budiman and Ruda~\cite{BR},
growth of graphene on Ir(111), see Coraux \textit{et al.\/} \cite{CNDEBWBMGPM}; and gold nanoparticles on a silicon substrate, see Winkler \textit{et al.} \cite{WWLGF};
blood clotting, see Guy \textit{et al.\/} \cite{GFK} and Rouleau formation, see Samsel and Perelson~\cite{SPI,SPII} and polymer growth of proteins in biopharmaceuticals,
see Galina \textit{et al.\/} \cite{GLK} or Zidar \textit{et al.\/} \cite{ZKR}.

Our paper is structured as follows. In Section~\ref{sec:trees} we introduce the algebra of planar binary trees.
We solve the abstract equation~\eqref{eq:quadnonassoc} in this algebra and present three different equivalent solution forms.
We apply our abstract results from Section~\ref{sec:trees} to coagulation examples in Section~\ref{sec:coagulation}. 
In Section~\ref{sec:simulation} we demonstrate that the solution flow can be represented as a linearised flow, and
we outline how the binary tree expansion solution can be approximated and used for numerical simulation.
Finally, in Section~\ref{sec:discussion} we discuss the many future directions to which our results herein extend.

\section{Non-associative algebras and binary trees}\label{sec:trees}                                                                           
We have seen that the solution~\eqref{eq:formalsoln} to the abstract evolutionary quadratic equation~\eqref{eq:quadnonassoc},
representing Smoluchowski's coagulation equation, is formally given as a series in non-associative products `$\star$' of the data $\xi$.
The coefficients of the series have a simple time-dependence.
We are thus lead to consider the non-commutative, non-associative algebra with a single generator, and product `$\star$'. 
Such algebras have a representation in the real algebra of planar binary trees. Hence our primary focus herein is on expansions in such algebras.

Let $\Hb=\Hb(\xi,\star)$ denote the real non-commutative, non-associative algebra generated by the function $\xi$, with the product `$\star$'.
We do not require $\Hb$ to be unital. The elements of $\Hb$ are polynomials or series of the form,
\begin{equation}\label{eq:seriesindeterminants}
  \gf=\hat\gf_1\,\xi+\hat\gf_{11}\,(\xi\star\xi)+\hat\gf_{1(11)}\,\bigl(\xi\star(\xi\star\xi)\bigr)+\cdots, 
\end{equation}
where all the coefficients $\hat\gf_1$, $\hat\gf_{11}$, $\hat\gf_{1(11)}$ and so forth, are real. Our focus in this section is on the combinatorial structure of such series.
In Section~\ref{sec:coagulation} we outline in what sense such series converge in $\Hb$, corresponding to a restriction on the coefficients $\hat\gf_1$, $\hat\gf_{11}$, etc. 
An immediate insight from the form of~\eqref{eq:seriesindeterminants} is that the elements of $\Hb$ are characterised by the
binary parenthesisation of strings, and linear combinations thereof. The binary parenthesisation of strings is isomorphically mapped to planar binary trees;
see Stanley~\cite[Ch.~6]{Stanley}. See Tables~\ref{table:treesupto3} and \ref{table:treesof4} for examples of such trees.
As a consequence $\Hb$ is isomorphically mapped to the real algebra of binary trees $\R\lb\mathbb T\rb$, which we outline in detail presently.
The non-commutative, non-associative product `$\star$' on $\Hb$ is replaced by the grafting of pairs of trees at their roots---see Procedure~\ref{proc:grafting} just below.
Indeed, as already highlighted in the introduction, we can represent series such as~\eqref{eq:seriesindeterminants} in the form, 
\begin{equation*}
  \gf=\hat\gf_1\,\xi({\tiny \begin{forest} for tree={grow'=90, parent anchor=center,child anchor=center} [] \path[fill=black] (.anchor) circle[radius=1.5pt]; \end{forest}})
  +\hat\gf_{11}\,\xi(\raisebox{-1pt}{{\tiny \begin{forest} for tree={grow'=90, parent anchor=center,child anchor=center, l=0cm,inner ysep=0cm,edge+=thick} [[][]] \end{forest}}})
  +\hat\gf_{1(11)}\,\xi\Bigl(\raisebox{-4pt}{{\tiny \begin{forest} for tree={grow'=90, parent anchor=center,child anchor=center, l=0cm,inner ysep=0cm,edge+=thick} [[][[][]]] \end{forest}}}\Bigr)+\cdots.
\end{equation*}
We can thus regard $\xi$ as a homomorphic map from $\R\lb\mathbb T\rb$ to $\Hb$ such that for any pair of trees $\tau_1$ and $\tau_2$, we have,
\begin{equation*}
  \xi\Bigl(\raisebox{-5pt}{{\scriptsize \begin{forest} for tree={grow'=90, parent anchor=center,child anchor=south, l=0cm,inner ysep=2pt,edge+=thick, s sep=0mm} [[$\tau_1$][$\tau_2$]] \end{forest}}}\Bigr)
  =\xi(\tau_1)\star\xi(\tau_2). 
\end{equation*}
In the abstract context here, we are seeking solutions of the evolutionary quadratic equation~\eqref{eq:quadnonassoc} in $\Hb$.
However with this homomorphic property of $\xi$ in mind, if we pullback equation~\eqref{eq:quadnonassoc} from $\Hb$ to $\R\lb\mathbb T\rb$,
the result is the following Smoluchowski tree equation on $\R\la\mathbb T\ra$.
\begin{definition}[Smoluchowski tree equation]\label{def:Smoluchowskitree}
  We define the \emph{Smoluchowski tree equation} as the evolutionary quadratic equation for $\gf=\gf(t)$ in $\R\la\mathbb T\ra$ given by,
  \begin{equation}\label{eq:coagtrees}
    \pa_t\gf=\raisebox{-5pt}{{\scriptsize \begin{forest} for tree={grow'=90, parent anchor=center,child anchor=south, l=0cm,inner ysep=2pt,edge+=thick, s sep=1mm} [[$\gf$][$\gf$]] \end{forest}}}.
  \end{equation}
  We augment~\eqref{eq:coagtrees} with the data
  $\gf(0)={\tiny \begin{forest} for tree={grow'=90, parent anchor=center,child anchor=center} [] \path[fill=black] (.anchor) circle[radius=1.5pt]; \end{forest}}$.
\end{definition}
\begin{remark}
  With a slight abuse of notation we use the same label `$\gf$' to represent the solution to the Smoluchowksi equation in its different guises.
  It is the solution to~\eqref{eq:quadnonassoc} when $\gf=\gf(s;t)$ is a function, which we consider in more detail in Section~\ref{sec:coagulation},
  or the solution $\gf\in\R\la\mathbb T\ra$ to the Smoluchowski tree equation~\eqref{eq:coagtrees}.
  It will always be clear from the context, which solution form we refer to.
\end{remark}
Our goal herein is to find a solution to initial value problem for the Smoluchowski tree equation~\eqref{eq:coagtrees} in Definition~\ref{def:Smoluchowskitree}.
For now, we focus on the structure of the real algebra of planar binary trees $\R\lb\mathbb T\rb$. Here $\mathbb T$ denotes the
set/forest of all rooted planar binary trees. The root is the bottom vertex and the trees grow upwards from there.
\begin{definition}[Grade]
The \emph{grade} $|\tau|$ of any binary tree $\tau\in\mathbb T$, is the number of vertices it possesses.
\end{definition}
\begin{remark}[Vertices]
  We nominate the number of vertices of any tree as the number of internal branchings or
  $\raisebox{-1pt}{{\tiny \begin{forest} for tree={grow'=90, parent anchor=center,child anchor=center, l=0cm,inner ysep=0cm,edge+=thick} [[][]] \end{forest}}}$'s
  the tree possesses. Other authors include the number of free branches in the vertex count as well, depending on the context.
  See Tables~\ref{table:treesupto3} and \ref{table:treesof4} for further clarification.
\end{remark}
Two natural procedures for generating rooted planar binary trees are as follows.
\begin{procedure}[Grafting]\label{proc:grafting}
  We can generate all the trees of grade $n$ from the trees of grade $0$ through $n-1$ by root-grafting as follows.
  We graft at the root all trees $\tau_1$ and $\tau_2$ such that $|\tau_1|+|\tau_2|=n-1$ with $|\tau_1|$ cycling through $0$, $1$, $\ldots$, $n-1$.
  This procedure exhaustively generates all the possible trees of grade $n$. 
  For example, we can generate all the binary trees of grade $4$ in Table~\ref{table:treesof4}, using all the trees in Table~\ref{table:treesupto3}, by:
  grafting the grade $0$ tree 
  `{\tiny \begin{forest} for tree={grow'=90, parent anchor=center,child anchor=center} [] \path[fill=black] (.anchor) circle[radius=1.5pt]; \end{forest}}'
  on the left with all the grade $3$ trees on the right; grafting the grade $1$ tree 
  `\raisebox{-1pt}{{\tiny \begin{forest} for tree={grow'=90, parent anchor=center,child anchor=center, l=0cm,inner ysep=0cm,edge+=thick} [[][]] \end{forest}}}'
  on the left with both the grade $2$ trees on the right; grafting both grade $2$ trees on the left with the grade $1$ tree 
  `\raisebox{-1pt}{{\tiny \begin{forest} for tree={grow'=90, parent anchor=center,child anchor=center, l=0cm,inner ysep=0cm,edge+=thick} [[][]] \end{forest}}}'
  on the right; and then finally grafting all the grade $3$ trees on the left with the grade $0$ tree
  `{\tiny \begin{forest} for tree={grow'=90, parent anchor=center,child anchor=center} [] \path[fill=black] (.anchor) circle[radius=1.5pt]; \end{forest}}'
  on the right.
\end{procedure}
\begin{remark}[Grafting and concatentation]
  As indicated above, one natural product on the real algebra of rooted planar trees $\R\la\mathbb T\ra$
  is grafting in the manner described in Procedure~\ref{proc:grafting}; see for example Lundervold and Munthe--Kaas~\cite{LM-Kbackward}.
  We endow $\R\la\mathbb T\ra$ with this product. It is akin to a concatenation product of parenthesised strings. 
\end{remark}
\begin{remark}[General grafting]
  In principle we can graft a planar binary tree $\tau^\prime$ to any free branch or even vertex of another planar binary tree. 
  However, we do not require this procedure herein.
\end{remark}
\begin{remark}[Catalan numbers]
  There is an intimate link between Catalan numbers and planar binary trees. Indeed, Stanley~\cite[Ex.~6.19]{Stanley} gives $66$ characterisations
  of Catalan numbers. For $n\in\mathbb N_0$, the $n$th Catalan number is,
  \begin{equation*}
     C_n\coloneqq\frac{1}{n+1}\begin{pmatrix} 2n \\ n\end{pmatrix}.
  \end{equation*}
  The Catalan number $C_n$ counts the number of planar binary trees with $n$ vertices, in other words, the number of binary trees generated
  at each grade by the grafting procedure above. Planar binary trees, or variants thereof, are often nominated as Catalan trees.
\end{remark}
\begin{procedure}[Branching]\label{proc:branching}
  We can generate all the trees of grade $n$ from the trees of grade $n-1$ as follows. To each tree $\tau$ at grade $n-1$ we attach a single branch
  `\raisebox{-1pt}{{\tiny \begin{forest} for tree={grow'=90, parent anchor=center,child anchor=center, l=0cm,inner ysep=0cm,edge+=thick} [[][]] \end{forest}}}',
  successively, to each free end of $\tau$. This procedure also exhaustively generates all the possible trees of grade $n$. However some trees at grade $n$ are
  multiply generated by those at grade $n-1$ by this procedure. The multiplicity of any tree generated by this procedure corresponds to the weight character
  of that tree; see Definition~\ref{def:weightchar} and Lemmas~\ref{lemma:grafting} and \ref{lemma:branchpower} below.
  For example, in Table~\ref{table:treesupto3}, if we successively add a branch
  `\raisebox{-1pt}{{\tiny \begin{forest} for tree={grow'=90, parent anchor=center,child anchor=center, l=0cm,inner ysep=0cm,edge+=thick} [[][]] \end{forest}}}'
  to each free end of the trees of grade $2$, we generate the five binary trees of grade $3$ shown, however the middle symmetric tree is generated
  with multiplicity~$2$.
\end{procedure}
\begin{remark}[Composition and budding]
The procedure for branching outlined is a simple version of the composition of trees; see eg.\/ Lundervold and Munthe--Kaas~\cite{LM-Kbackward,LM-K}.
More general trees and linear combinations of trees than just
`\raisebox{-1pt}{{\tiny \begin{forest} for tree={grow'=90, parent anchor=center,child anchor=center, l=0cm,inner ysep=0cm,edge+=thick} [[][]] \end{forest}}}'
can be attached to free branches in the manner outlined in Procedure~\ref{proc:branching}.
Branching is also related to ``budding''. See Definition~\ref{def:budding} in Section~\ref{sec:simulation} and items (iii) and (v) in Section~\ref{sec:discussion}.
\end{remark}
\begin{remark}[Empty tree]
  Note, the empty tree $\emptyset$ doesn't play a role in either of the tree generating procedures above.
  Indeed, the empty tree $\emptyset$ plays the role of a unit in $\R\la\mathbb T\ra$, though we do not explicitly use this here.
\end{remark}
We define the \emph{weight character} associated with any given tree in $\R\lb\mathbb T\rb$ as follows.
\begin{definition}[Weight character]\label{def:weightchar}
  We define the \emph{weight character} $\omega\colon\R\lb\mathbb T\rb\to\Qb$ of any tree $\tau\in\R\lb\mathbb T\rb$ recursively as follows
  (by convention $\omega({\tiny \begin{forest} for tree={grow'=90, parent anchor=center,child anchor=center} [] \path[fill=black] (.anchor) circle[radius=1.5pt]; \end{forest}})\coloneqq1$): 
  \begin{align*}
    \omega(\raisebox{-1pt}{{\tiny \begin{forest} for tree={grow'=90, parent anchor=center,child anchor=center, l=0cm,inner ysep=0cm,edge+=thick} [[][]] \end{forest}}})
    &=\begin{pmatrix} 0\\ 0\end{pmatrix}
    \intertext{and then}
    \omega\Bigl(\raisebox{-5pt}{{\scriptsize \begin{forest} for tree={grow'=90, parent anchor=center,child anchor=south, l=0cm,inner ysep=2pt,edge+=thick, s sep=0mm} [[$\tau_1$][$\tau_2$]] \end{forest}}}\Bigr)
    &=\begin{pmatrix} |\tau_1|+|\tau_2|\\ |\tau_1|\end{pmatrix} \omega(\tau_1)\,\omega(\tau_2).
  \end{align*}
\end{definition}
Thus in essence, to compute the weight character of a given binary tree, we split the tree at its root and compute the product of
the weight characters of the two subtrees $\tau_1$ and $\tau_2$ generated and include the Leibniz factor $|\tau_1|+|\tau_2|$ choose $|\tau_1|$.
The weight characters of $\tau_1$ and $\tau_2$ are computed by propagating this process.
Example weight characters for trees up to grade $4$ are given in Tables~\ref{table:treesupto3} and \ref{table:treesof4}.
Therein we abbreviate products of Leibniz coefficients by,
\begin{equation*}
\begin{pmatrix} m_1\\k_1\end{pmatrix}\begin{pmatrix} m_2\\k_2\end{pmatrix}\cdots \begin{pmatrix} m_n\\k_n \end{pmatrix} = \begin{pmatrix} m_1 & m_2 & \cdots & m_n\\ k_1 & k_2 & \cdots & k_n\end{pmatrix}.
\end{equation*}
The weight character naturally arises in the process of tree generation by branching, as described above, and outlined below. See Lemmas~\ref{lemma:grafting} and \ref{lemma:branchpower}. 
\begin{definition}[Exponential series]\label{def:expseries}
We define an \emph{exponential tree series} in $\R\lb\mathbb T\rb$ as an expansion of the form,
\begin{equation}\label{eq:exptreeseries}
\ff=\sum_{\tau\in\mathbb T\backslash\emptyset}\frac{\omega(\tau)}{|\tau|!}\,\hat{\ff}_\tau\cdot\tau,
\end{equation}
where the $\hat{\ff}_\tau$ are the real coefficients associated with each tree $\tau\in\mathbb T\backslash\emptyset$, and
$\omega(\tau)$ and $|\tau|$ are respectively the weight character and grade of $\tau$.
\end{definition}
Two further operators are useful to our development.

\begin{definition}[Branching operator]\label{def:branchingoperator}
  We define the linear \emph{branching operator} $B_{\vee}\colon\R\lb\mathbb T\rb\to\R\lb\mathbb T\rb$ as the operator that acts on any
  tree $\tau\in\mathbb T$ by successively, additively attaching a branch
  `\raisebox{-1pt}{{\tiny \begin{forest} for tree={grow'=90, parent anchor=center,child anchor=center, l=0cm,inner ysep=0cm,edge+=thick} [[][]] \end{forest}}}'
  to each free end of the tree $\tau$, thus generating a sum of the corresponding trees at the next grade.
\end{definition}

\begin{remark}\label{rmk:extend}
  We can extend the branching operator to $B_{\tau^\prime}$, for any $\tau^\prime\in\mathbb T$, so that it becomes the
  operator that successively, additively attaches the tree $\tau^\prime$ to each free end of the tree $\tau$. We can also
  linearly extend $B_{\tau^\prime}$ so that $B_{a\cdot\tau_1^\prime+b\cdot\tau_2^\prime}=a\cdot B_{\tau_1^\prime}+b\cdot B_{\tau_2^\prime}$ for any $a,b\in\R$.  
\end{remark}

\begin{example}\label{ex:branching}
  Some examples of the action of the branching operator are as follows, we observe,
  \begin{align*}
    B_{\vee}({\tiny \begin{forest} for tree={grow'=90, parent anchor=center,child anchor=center} [] \path[fill=black] (.anchor) circle[radius=1.5pt]; \end{forest}})
    =&\;\raisebox{-1pt}{{\tiny \begin{forest} for tree={grow'=90, parent anchor=center,child anchor=center, l=0cm,inner ysep=0cm,edge+=thick} [[][]] \end{forest}}},\\    
    B_{\vee}(\raisebox{-1pt}{{\tiny \begin{forest} for tree={grow'=90, parent anchor=center,child anchor=center, l=0cm,inner ysep=0cm,edge+=thick} [[][]] \end{forest}}})
    =&\;\raisebox{-4pt}{{\tiny \begin{forest} for tree={grow'=90, parent anchor=center,child anchor=center, l=0cm,inner ysep=0cm,edge+=thick} [[][[][]]] \end{forest}}}
    +\raisebox{-4pt}{{\tiny \begin{forest} for tree={grow'=90, parent anchor=center,child anchor=center, l=0cm,inner ysep=0cm,edge+=thick} [[[][]][]] \end{forest}}},\\
    B_{\vee}^2(\raisebox{-1pt}{{\tiny \begin{forest} for tree={grow'=90, parent anchor=center,child anchor=center, l=0cm,inner ysep=0cm,edge+=thick} [[][]] \end{forest}}})
    =&\;B_\vee\Bigl(\raisebox{-4pt}{{\tiny \begin{forest} for tree={grow'=90, parent anchor=center,child anchor=center, l=0cm,inner ysep=0cm,edge+=thick} [[][[][]]] \end{forest}}}
    +\raisebox{-4pt}{{\tiny \begin{forest} for tree={grow'=90, parent anchor=center,child anchor=center, l=0cm,inner ysep=0cm,edge+=thick} [[[][]][]] \end{forest}}}\Bigr)\\
    =&\;\raisebox{-8pt}{{\tiny \begin{forest} for tree={grow'=90, parent anchor=center,child anchor=center, l=0cm,inner ysep=0cm,edge+=thick} [[][[][[][]]]] \end{forest}}}
    +\raisebox{-8pt}{{\tiny \begin{forest} for tree={grow'=90, parent anchor=center,child anchor=center, l=0cm,inner ysep=0cm,edge+=thick} [[][[[][]][]]] \end{forest}}}
    +2\cdot\raisebox{-8pt}{{\tiny \begin{forest} for tree={grow'=90, parent anchor=center,child anchor=center, l=0cm,inner ysep=0cm,edge+=thick} [[[][]][[][]]] \end{forest}}}
    +\raisebox{-8pt}{{\tiny \begin{forest} for tree={grow'=90, parent anchor=center,child anchor=center, l=0cm,inner ysep=0cm,edge+=thick} [[[][[][]]][]] \end{forest}}}
    +\raisebox{-8pt}{{\tiny \begin{forest} for tree={grow'=90, parent anchor=center,child anchor=center, l=0cm,inner ysep=0cm,edge+=thick} [[[[][]][]][]] \end{forest}}}.
  \end{align*}
\end{example}

\begin{table}
  \caption{We list all the rooted planar binary trees up to grade $3$. For the grades in the left column, we list each tree $\tau\in\mathbb T$ in the second column,
    its corresponding weight $\omega(\tau)$ in the third column, the number of symmetries $\sigma(\tau)$ it possesses (see Remark~\ref{rmk:nonplanartrees}) in the fourth column
    and its word-coding in terms of levels in the final column.}
\label{table:treesupto3}
\begin{center}
   \setlength{\baselineskip}{2\baselineskip}
\begin{tabular}{|c|c|c|c|c|}
\hline
$\phantom{\biggl|}$ grade $\phantom{\biggl|}$ & tree & weight & symm.\/ & levels \\ 
\hline
0 & {\tiny \begin{forest} for tree={grow'=90, parent anchor=center,child anchor=center} [] \path[fill=black] (.anchor) circle[radius=1.5pt]; \end{forest}} & 1 & 0 & 0  \\[5pt]
1 & {\tiny \begin{forest} for tree={grow'=90, parent anchor=center,child anchor=center, l=0cm,inner ysep=0cm,edge+=thick} [[][]] \end{forest}} &
\raisebox{\depth}{{\scriptsize $\begin{pmatrix} 0\\ 0\end{pmatrix}$}}& 0 & 1 \\\hline
&&&&\\[-5pt]
2 & {\tiny \begin{forest} for tree={grow'=90, parent anchor=center,child anchor=center, l=0cm,inner ysep=0cm,edge+=thick} [[][[][]]] \end{forest}} &
\raisebox{\depth}{{\scriptsize $\begin{pmatrix} 1\!\!&\!\! 0\\ 0 \!\!&\!\! 0\end{pmatrix}$}}& 1 & 12\\
&&&&\\[-5pt]
2 & {\tiny \begin{forest} for tree={grow'=90, parent anchor=center,child anchor=center, l=0cm,inner ysep=0cm,edge+=thick} [[[][]][]] \end{forest}} &
\raisebox{\depth}{{\scriptsize $\begin{pmatrix} 1\!\!&\!\! 0\\ 1 \!\!&\!\! 0\end{pmatrix}$}}& 1 & 21 \\\hline
&&&&\\[-5pt]
3 & {\tiny \begin{forest} for tree={grow'=90, parent anchor=center,child anchor=center, l=0cm,inner ysep=0cm,edge+=thick} [[][[][[][]]]] \end{forest}} &
\raisebox{\depth}{{\scriptsize $\begin{pmatrix} 2\!\!&\!\! 1\!\! &\!\! 0 \\ 0 \!\!&\!\! 0 \!\!&\!\! 0\end{pmatrix}$}}& 2 & 123\\
  3 & {\tiny \begin{forest} for tree={grow'=90, parent anchor=center,child anchor=center, l=0cm,inner ysep=0cm,edge+=thick} [[][[[][]][]]] \end{forest}} &
\raisebox{\depth}{{\scriptsize $\begin{pmatrix} 2\!\!&\!\! 1\!\! &\!\! 0 \\ 0 \!\!&\!\! 1 \!\!&\!\! 0\end{pmatrix}$}}& 2 & 132\\[7pt]
3 & {\tiny \begin{forest} for tree={grow'=90, parent anchor=center,child anchor=center, l=0cm,inner ysep=0cm,edge+=thick} [[[][]][[][]]] \end{forest}} &
\raisebox{\depth}{{\scriptsize $\begin{pmatrix} 2\!\!&\!\! 0\!\! &\!\! 0 \\ 1 \!\!&\!\! 0 \!\!&\!\! 0\end{pmatrix}$}}& 0 & 212\\
3 & {\tiny \begin{forest} for tree={grow'=90, parent anchor=center,child anchor=center, l=0cm,inner ysep=0cm,edge+=thick} [[[][[][]]][]] \end{forest}} &
\raisebox{\depth}{{\scriptsize $\begin{pmatrix} 2\!\!&\!\! 1\!\! &\!\! 0 \\ 2 \!\!&\!\! 0 \!\!&\!\! 0\end{pmatrix}$}}& 2 & 231\\
3 & {\tiny \begin{forest} for tree={grow'=90, parent anchor=center,child anchor=center, l=0cm,inner ysep=0cm,edge+=thick} [[[[][]][]][]] \end{forest}} &
\raisebox{\depth}{{\scriptsize $\begin{pmatrix} 2\!\!&\!\! 1\!\! &\!\! 0 \\ 2 \!\!&\!\! 1 \!\!&\!\! 0\end{pmatrix}$}}& 2 & 321\\\hline
\end{tabular}
\end{center}
\end{table}

\begin{definition}[Grade operator]\label{def:gradingoperator}
  The \emph{grade operator} $G$ on $\R\lb\mathbb T\rb$ is given for any $\tau\in\mathbb T$ by,
  \begin{equation*}
     G\colon\tau\mapsto\frac{1}{|\tau|}\cdot\tau.
  \end{equation*}
  Naturally, $G$ is an endomorphism on $\R\lb\mathbb T\rb$.  
\end{definition}

\begin{example}
The grading operator normalises the given tree by its grade, so that for example,
\begin{equation*}
  G\Bigl(\raisebox{-4pt}{{\tiny \begin{forest} for tree={grow'=90, parent anchor=center,child anchor=center, l=0cm,inner ysep=0cm,edge+=thick} [[[][]][]] \end{forest}}}\Bigr)
  =\frac12\cdot\raisebox{-4pt}{{\tiny \begin{forest} for tree={grow'=90, parent anchor=center,child anchor=center, l=0cm,inner ysep=0cm,edge+=thick} [[[][]][]] \end{forest}}}
  \quad\text{and}\quad
  G\Biggl(\raisebox{-8pt}{{\tiny \begin{forest} for tree={grow'=90, parent anchor=center,child anchor=center, l=0cm,inner ysep=0cm,edge+=thick} [[[][[][]]][]] \end{forest}}}\Biggr)
=\frac13\cdot\raisebox{-8pt}{{\tiny \begin{forest} for tree={grow'=90, parent anchor=center,child anchor=center, l=0cm,inner ysep=0cm,edge+=thick} [[[][[][]]][]] \end{forest}}}.
\end{equation*}
\end{example}

\begin{example}\label{ex:adapted}
It is also natural to combine the grading and branching operators, and so we set, $\hat{B}_{\vee}\coloneqq G\circ B_{\vee}$. 
Thus, adapting Example~\ref{ex:branching}, we observe,
  \begin{align*}
    \hat{B}_{\vee}^2(\raisebox{-1pt}{{\tiny \begin{forest} for tree={grow'=90, parent anchor=center,child anchor=center, l=0cm,inner ysep=0cm,edge+=thick} [[][]] \end{forest}}})
    =&\;\tfrac12 B_\vee\Bigl(\raisebox{-4pt}{{\tiny \begin{forest} for tree={grow'=90, parent anchor=center,child anchor=center, l=0cm,inner ysep=0cm,edge+=thick} [[][[][]]] \end{forest}}}
    +\raisebox{-4pt}{{\tiny \begin{forest} for tree={grow'=90, parent anchor=center,child anchor=center, l=0cm,inner ysep=0cm,edge+=thick} [[[][]][]] \end{forest}}}\Bigr)\\
    =&\;\tfrac16\Biggl(\!\!\raisebox{-8pt}{{\tiny \begin{forest} for tree={grow'=90, parent anchor=center,child anchor=center, l=0cm,inner ysep=0cm,edge+=thick} [[][[][[][]]]] \end{forest}}}
    +\raisebox{-8pt}{{\tiny \begin{forest} for tree={grow'=90, parent anchor=center,child anchor=center, l=0cm,inner ysep=0cm,edge+=thick} [[][[[][]][]]] \end{forest}}}
    +2\cdot\raisebox{-8pt}{{\tiny \begin{forest} for tree={grow'=90, parent anchor=center,child anchor=center, l=0cm,inner ysep=0cm,edge+=thick} [[[][]][[][]]] \end{forest}}}
    +\raisebox{-8pt}{{\tiny \begin{forest} for tree={grow'=90, parent anchor=center,child anchor=center, l=0cm,inner ysep=0cm,edge+=thick} [[[][[][]]][]] \end{forest}}}
    +\raisebox{-8pt}{{\tiny \begin{forest} for tree={grow'=90, parent anchor=center,child anchor=center, l=0cm,inner ysep=0cm,edge+=thick} [[[[][]][]][]] \end{forest}}}\!\!\Biggr).
  \end{align*}
\end{example}
Naturally, since the non-associative product `$\star$' is bilinear, so is the grafting operator.
In other words, for example, for any three trees $\tau_1,\tau_2,\tau_3\in\mathbb T$, we have,
\begin{equation}\label{eq:bilinear}
 \raisebox{-5pt}{{\scriptsize \begin{forest} for tree={grow'=90, parent anchor=center,child anchor=south, l=0cm,inner ysep=2pt,edge+=thick, s sep=0mm} [[$\tau_1+\tau_2$][$\tau_3$]] \end{forest}}}
 =\raisebox{-5pt}{{\scriptsize \begin{forest} for tree={grow'=90, parent anchor=center,child anchor=south, l=0cm,inner ysep=2pt,edge+=thick, s sep=0mm} [[$\tau_1$][$\tau_3$]] \end{forest}}}
 +\raisebox{-5pt}{{\scriptsize \begin{forest} for tree={grow'=90, parent anchor=center,child anchor=south, l=0cm,inner ysep=2pt,edge+=thick, s sep=0mm} [[$\tau_2$][$\tau_3$]] \end{forest}}}.
\end{equation}
The branching operator also acts like a derivation in the following sense. For any two trees $\tau_1,\tau_2\in\mathbb T$, we observe,
\begin{equation}\label{eq:derivation}
 B_{\vee}\Bigl(\raisebox{-5pt}{{\scriptsize \begin{forest} for tree={grow'=90, parent anchor=center,child anchor=south, l=0cm,inner ysep=2pt,edge+=thick, s sep=0mm} [[$\tau_1$][$\tau_2$]] \end{forest}}}\Bigr)
 =\raisebox{-5pt}{{\scriptsize \begin{forest} for tree={grow'=90, parent anchor=center,child anchor=south, l=0cm,inner ysep=2pt,edge+=thick, s sep=0mm} [[$B_\vee(\tau_1)$][$\tau_2$]] \end{forest}}}
 +\raisebox{-5pt}{{\scriptsize \begin{forest} for tree={grow'=90, parent anchor=center,child anchor=south, l=0cm,inner ysep=2pt,edge+=thick, s sep=0mm} [[$\tau_1$][$B_\vee(\tau_2)$]] \end{forest}}}.
\end{equation}
A crucial component of our main result herein is the following identity. For convenience, for all $n\in\mathbb N_0$,
let $\gf_n$ denote the weighted sum of all trees of grade $n$, so,
\begin{equation}\label{eq:gndef}
   \gf_n\coloneqq\sum_{|\tau|=n} \omega(\tau)\cdot\tau.
\end{equation}
\begin{lemma}[Grafting identity]\label{lemma:grafting}
  The following \emph{grafting identity} holds for all $n\in\mathbb N_0$:
  \begin{equation*}
    \gf_{n+1}=\sum_{k=0}^n\begin{pmatrix} n\\ k\end{pmatrix}
    \cdot\raisebox{-5pt}{{\scriptsize \begin{forest} for tree={grow'=90, parent anchor=center,child anchor=south, l=0cm,inner ysep=2pt,edge+=thick, s sep=0mm} [[$\gf_{n-k}$][$\gf_k$]] \end{forest}}}.
  \end{equation*}
\end{lemma}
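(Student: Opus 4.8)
The plan is to derive the identity directly from the two structural facts we already have: the bijective root-decomposition of planar binary trees encoded in Procedure~\ref{proc:grafting}, and the multiplicativity built into the weight character in Definition~\ref{def:weightchar}. First I would observe that every $\sigma\in\mathbb T$ with $|\sigma|=n+1\geqslant1$ arises in exactly one way by grafting an ordered pair of planar binary trees $(\tau_1,\tau_2)$ at a common root, where $\tau_1$ is the left subtree and $\tau_2$ the right subtree, and necessarily $|\tau_1|+|\tau_2|=n$. Read the other way, this is the statement that the grafting procedure is a bijection: the trees of grade $n+1$ are in one-to-one correspondence with ordered pairs $(\tau_1,\tau_2)$ in which $|\tau_1|$ cycles through $0,1,\ldots,n$ and $|\tau_2|=n-|\tau_1|$. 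In particular no tree of grade $n+1$ is produced twice and none is missed.

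Next I would apply the recursive formula for $\omega$ to such a $\sigma$. Writing $k=|\tau_2|$, so $|\tau_1|=n-k$, Definition~\ref{def:weightchar} gives
\begin{equation*}
  \omega(\sigma)=\binom{(n-k)+k}{\,n-k\,}\,\omega(\tau_1)\,\omega(\tau_2)=\binom{n}{n-k}\,\omega(\tau_1)\,\omega(\tau_2)=\binom{n}{k}\,\omega(\tau_1)\,\omega(\tau_2),
\end{equation*}
using the symmetry $\binom{n}{n-k}=\binom{n}{k}$. Substituting this into the defining sum~\eqref{eq:gndef} for $\gf_{n+1}$ and re-indexing the single sum over $\sigma$ as a double sum — over $k\in\{0,\ldots,n\}$ and over the pairs $(\tau_1,\tau_2)$ with $|\tau_1|=n-k$ and $|\tau_2|=k$, which is legitimate precisely by the bijection of the previous paragraph — yields
\begin{equation*}
  \gf_{n+1}=\sum_{k=0}^{n}\binom{n}{k}\sum_{\substack{|\tau_1|=n-k\\ |\tau_2|=k}}\omega(\tau_1)\,\omega(\tau_2)\cdot\raisebox{-5pt}{{\scriptsize \begin{forest} for tree={grow'=90, parent anchor=center,child anchor=south, l=0cm,inner ysep=2pt,edge+=thick, s sep=0mm} [[$\tau_1$][$\tau_2$]] \end{forest}}}.
\end{equation*}

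Finally I would invoke the bilinearity of the grafting operator, identity~\eqref{eq:bilinear}, to pull the inner double sum inside the grafting, factoring it as the graft of $\sum_{|\tau_1|=n-k}\omega(\tau_1)\cdot\tau_1=\gf_{n-k}$ with $\sum_{|\tau_2|=k}\omega(\tau_2)\cdot\tau_2=\gf_k$; this is exactly the asserted identity. The case $n=0$ is subsumed, since $\gf_0={\tiny \begin{forest} for tree={grow'=90, parent anchor=center,child anchor=center} [] \path[fill=black] (.anchor) circle[radius=1.5pt]; \end{forest}}$ and grafting this with itself gives $\gf_1$. There is no serious obstacle here: the only point demanding care is the bookkeeping of the root-decomposition, namely keeping the left/right order of $(\tau_1,\tau_2)$ consistent so that the Leibniz coefficient lands as $\binom{n}{|\tau_1|}$ and matches the left argument $\gf_{n-k}$ on the right-hand side; everything else is a direct unwinding of the definitions together with bilinearity.
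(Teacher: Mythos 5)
Your proposal is correct and is essentially the paper's own proof run in the opposite direction: the paper starts from the right-hand side, expands $\gf_{n-k}$ and $\gf_k$, and uses bilinearity of grafting together with the recursive definition of the weight character to reassemble $\sum_{|\tau|=n+1}\omega(\tau)\cdot\tau$, whereas you start from the left-hand side and decompose each grade-$(n+1)$ tree uniquely at its root before regrouping. The ingredients (unique root-decomposition, multiplicativity of $\omega$ with the Leibniz factor, and bilinearity~\eqref{eq:bilinear}) are identical, so there is nothing to add.
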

\begin{proof}
  Using that the grafting operator is bilinear and the properties of the weight character given in Definition~\ref{def:weightchar}, we observe the right-hand side equals,
  \begin{align*}
    \sum_{k=0}^n&\sum_{|\tau_1|=n-k}\sum_{|\tau_2|=k} \begin{pmatrix} |\tau_1|+|\tau_2|\\ |\tau_1|\end{pmatrix}\omega(\tau_1)\omega(\tau_2)\cdot
    \raisebox{-5pt}{{\scriptsize \begin{forest} for tree={grow'=90, parent anchor=center,child anchor=south, l=0cm,inner ysep=2pt,edge+=thick, s sep=0mm} [[$\tau_1$][$\tau_2$]] \end{forest}}}\\
    =&\;\sum_{k=0}^n\sum_{|\tau_1|=n-k}\sum_{|\tau_2|=k}
    \omega\Bigl(\raisebox{-5pt}{{\scriptsize \begin{forest} for tree={grow'=90, parent anchor=center,child anchor=south, l=0cm,inner ysep=2pt,edge+=thick, s sep=0mm} [[$\tau_1$][$\tau_2$]] \end{forest}}}\Bigr)\cdot
    \raisebox{-5pt}{{\scriptsize \begin{forest} for tree={grow'=90, parent anchor=center,child anchor=south, l=0cm,inner ysep=2pt,edge+=thick, s sep=0mm} [[$\tau_1$][$\tau_2$]] \end{forest}}}\\
    =&\;\sum_{|\tau|=n+1} \omega(\tau)\cdot\tau,
  \end{align*}
giving the result.\qed
\end{proof}
As a consequence, we have the following expansion for any power of the branching operator $B_{\vee}$
acting on the tree `{\tiny \begin{forest} for tree={grow'=90, parent anchor=center,child anchor=center} [] \path[fill=black] (.anchor) circle[radius=1.5pt]; \end{forest}}'.
\begin{lemma}\label{lemma:branchpower}
  For any $n\in\mathbb N_0$, we have,
  \begin{equation*}
    B_{\vee}^n({\tiny \begin{forest} for tree={grow'=90, parent anchor=center,child anchor=center} [] \path[fill=black] (.anchor) circle[radius=1.5pt]; \end{forest}})=\sum_{|\tau|=n}\omega(\tau)\cdot\tau.
  \end{equation*}
  or equivalently, $B_{\vee}^n({\tiny \begin{forest} for tree={grow'=90, parent anchor=center,child anchor=center} [] \path[fill=black] (.anchor) circle[radius=1.5pt]; \end{forest}})=\gf_n$.
\end{lemma}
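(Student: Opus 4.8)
The plan is to isolate and prove the auxiliary identity $B_{\vee}(\gf_n)=\gf_{n+1}$ for every $n\in\mathbb N_0$ (writing $\alpha\vee\beta$ for the tree obtained by grafting $\alpha$ on the left and $\beta$ on the right at a new root), and then read off the lemma from it. Granting this identity, the lemma follows by a one-line induction on $n$: the unique grade-$0$ tree has weight $1$ by the convention in Definition~\ref{def:weightchar}, so $\gf_0$ is exactly that tree and $B_{\vee}^{0}(\gf_0)=\gf_0$; and if $B_{\vee}^{n}$ applied to the grade-$0$ tree equals $\gf_n$, then $B_{\vee}^{n+1}$ applied to it equals $B_{\vee}(\gf_n)=\gf_{n+1}$.

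It thus remains to prove $B_{\vee}(\gf_n)=\gf_{n+1}$, which I would do by strong induction on $n$. The base case $n=0$ asserts that attaching a branch to the single free end of the grade-$0$ tree produces the grade-$1$ tree, i.e. $B_{\vee}(\gf_0)=\gf_1$ (here $\gf_0$ and $\gf_1$ are just the grade-$0$ and grade-$1$ trees, each of weight $1$); this is immediate from the definition of $B_{\vee}$ and appears as the first line of Example~\ref{ex:branching}. For the inductive step, fix $n\geq 1$ and assume $B_{\vee}(\gf_m)=\gf_{m+1}$ for all $m<n$. Applying $B_{\vee}$ to the grafting identity of Lemma~\ref{lemma:grafting} in the form $\gf_n=\sum_{k=0}^{n-1}\binom{n-1}{k}\,(\gf_{n-1-k}\vee\gf_k)$, and using that $B_{\vee}$ is linear and, by \eqref{eq:bilinear} and \eqref{eq:derivation}, acts as a derivation over grafting at the root, one obtains
\[
 B_{\vee}(\gf_n)=\sum_{k=0}^{n-1}\binom{n-1}{k}\Bigl(B_{\vee}(\gf_{n-1-k})\vee\gf_k+\gf_{n-1-k}\vee B_{\vee}(\gf_k)\Bigr).
\]
Substituting the inductive hypothesis $B_{\vee}(\gf_{n-1-k})=\gf_{n-k}$ and $B_{\vee}(\gf_k)=\gf_{k+1}$, reindexing the second sum via $j=k+1$, and collecting the coefficient of each $\gf_{n-j}\vee\gf_j$ with Pascal's rule $\binom{n-1}{j}+\binom{n-1}{j-1}=\binom{n}{j}$ (the endpoints $j=0$ and $j=n$, where only one sum contributes, giving $\binom{n}{0}$ and $\binom{n}{n}$ directly), the right-hand side collapses to $\sum_{j=0}^{n}\binom{n}{j}\,(\gf_{n-j}\vee\gf_j)$, which by Lemma~\ref{lemma:grafting} is $\gf_{n+1}$.

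The only computational step is this last collapse of two binomial sums into one, and it is entirely routine; the structural content is carried by the derivation property \eqref{eq:derivation} of $B_{\vee}$, the bilinearity \eqref{eq:bilinear} of grafting, and Lemma~\ref{lemma:grafting}, so no new combinatorial input about binary trees is needed. (Equivalently, one may fold the auxiliary identity into a single strong induction proving the lemma directly: from $B_{\vee}^{m}(\text{grade-}0\text{ tree})=\gf_m$ for all $m\le n$ one gets $B_{\vee}(\gf_m)=B_{\vee}^{m+1}(\text{grade-}0\text{ tree})=\gf_{m+1}$ for $m<n$, which is precisely what the collapse step invokes.) The main obstacle, such as it is, is simply keeping the reindexing bookkeeping straight.
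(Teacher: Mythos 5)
Your proposal is correct and follows essentially the same route as the paper: the paper also runs an induction whose engine is the identity $B_\vee(\gf_n)=\gf_{n+1}$, established by applying $B_\vee$ to the grafting identity of Lemma~\ref{lemma:grafting}, using the derivation property \eqref{eq:derivation} and bilinearity \eqref{eq:bilinear}, reindexing, and collapsing the two binomial sums with Pascal's rule (with the same care at the endpoint terms). The only cosmetic difference is that you extract the auxiliary identity as a separate strong induction whereas the paper folds it into a single induction on the lemma itself, a variant you already note is equivalent.
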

\begin{proof}
  We prove the result by induction. Assume that the result holds for all powers $B_\vee^\ell$ with $\ell=0,1,\ldots,n$.
  Note that for any of the values $\ell=0,1,\ldots,n-1$, the statement above is equivalent to the statement,
  \begin{equation}\label{eq:inductionassumption}
    B_\vee\Biggl(\sum_{|\tau|=\ell}\omega(\tau)\cdot\tau\Biggr)=\sum_{|\tau|=\ell+1}\omega(\tau)\cdot\tau,
  \end{equation}
  or in other words, $B_\vee(\gf_\ell)=\gf_{\ell+1}$. First, we observe that,
  \begin{equation*}
    B_\vee^{n+1}({\tiny \begin{forest} for tree={grow'=90, parent anchor=center,child anchor=center} [] \path[fill=black] (.anchor) circle[radius=1.5pt]; \end{forest}})
    =B_\vee\bigl(B_\vee^n({\tiny \begin{forest} for tree={grow'=90, parent anchor=center,child anchor=center} [] \path[fill=black] (.anchor) circle[radius=1.5pt]; \end{forest}})\bigr)
    =B_\vee\Biggl(\sum_{|\tau|=n}\omega(\tau)\cdot\tau\Biggr)
    =B_\vee(\gf_n),
  \end{equation*}
  using our induction assumption.
  Second, by direct computation, using Lemma~\ref{lemma:grafting}, we observe that,
  \begin{equation*}
    B_\vee(\gf_n)=B_\vee\Biggl(\sum_{k=0}^{n-1}\begin{pmatrix} n-1\\ k\end{pmatrix}
    \cdot\raisebox{-5pt}{{\scriptsize \begin{forest} for tree={grow'=90, parent anchor=center,child anchor=south, l=0cm,inner ysep=2pt,edge+=thick, s sep=0mm} [[$\gf_{n-1-k}$][$\gf_k$]] \end{forest}}}\Biggr).
  \end{equation*}
  Third, using the grafting bilinear property \eqref{eq:bilinear} and the derivation property of $B_\vee$ from \eqref{eq:derivation}, we have,
  \begin{equation*}
     B_\vee(\gf_n)=\sum_{k=0}^{n-1}\begin{pmatrix} n-1\\ k\end{pmatrix}\cdot\biggl(
       \raisebox{-5pt}{{\scriptsize \begin{forest} for tree={grow'=90, parent anchor=center,child anchor=south, l=0cm,inner ysep=2pt,edge+=thick, s sep=0mm} [[$B_\vee(\gf_{n-1-k})$][$\gf_k$]] \end{forest}}}
       +\raisebox{-5pt}{{\scriptsize \begin{forest} for tree={grow'=90, parent anchor=center,child anchor=south, l=0cm,inner ysep=2pt,edge+=thick, s sep=0mm} [[$\gf_{n-1-k}$][$B_\vee(\gf_k)$]] \end{forest}}}\biggr).
  \end{equation*}
  Fourth, substituting for $\gf_{n-1-k}$ and $\gf_k$ and using the induction assumption~\eqref{eq:inductionassumption}, we observe $B_\vee(\gf_n)$ equals,
  \begin{align*}
  &\;\sum_{k=0}^{n-1}\begin{pmatrix} n-1\\ k\end{pmatrix}\sum_{|\tau_1|=n-k}\sum_{|\tau_2|=k}\omega(\tau_1)\omega(\tau_2)
   \cdot\raisebox{-5pt}{{\scriptsize \begin{forest} for tree={grow'=90, parent anchor=center,child anchor=south, l=0cm,inner ysep=2pt,edge+=thick, s sep=0mm} [[$\tau_1$][$\tau_2$]] \end{forest}}}\\
   &\;+\sum_{k=0}^{n-1}\begin{pmatrix} n-1\\ k\end{pmatrix}\sum_{|\tau_1|=n-1-k}\sum_{|\tau_2|=k+1}\omega(\tau_1)\omega(\tau_2)
     \cdot\raisebox{-5pt}{{\scriptsize \begin{forest} for tree={grow'=90, parent anchor=center,child anchor=south, l=0cm,inner ysep=2pt,edge+=thick, s sep=0mm} [[$\tau_1$][$\tau_2$]] \end{forest}}}\\
   =&\;\sum_{k=0}^{n-1}\begin{pmatrix} n-1\\ k\end{pmatrix}\sum_{|\tau_1|=n-k}\sum_{|\tau_2|=k}\omega(\tau_1)\omega(\tau_2)
   \cdot\raisebox{-5pt}{{\scriptsize \begin{forest} for tree={grow'=90, parent anchor=center,child anchor=south, l=0cm,inner ysep=2pt,edge+=thick, s sep=0mm} [[$\tau_1$][$\tau_2$]] \end{forest}}}\\
     &\;+\sum_{k=1}^{n}\begin{pmatrix} n-1\\ k-1\end{pmatrix}\sum_{|\tau_1|=n-k}\sum_{|\tau_2|=k}\omega(\tau_1)\omega(\tau_2)
     \cdot\raisebox{-5pt}{{\scriptsize \begin{forest} for tree={grow'=90, parent anchor=center,child anchor=south, l=0cm,inner ysep=2pt,edge+=thick, s sep=0mm} [[$\tau_1$][$\tau_2$]] \end{forest}}},     
   \end{align*}
  where we shifted the $k$-summation label in the second term by one. Fifth, carefully considering of the $k=0$ case in the first term and the $k=n$ case in the second term in the last line above,  
  and using the identity,
  \begin{equation*}
    \begin{pmatrix} n-1\\ k\end{pmatrix}+\begin{pmatrix} n-1\\ k-1\end{pmatrix}=\begin{pmatrix} n\\ k\end{pmatrix},
  \end{equation*}
  we observe that,
  \begin{align*}
   B_\vee^{n+1}({\tiny \begin{forest} for tree={grow'=90, parent anchor=center,child anchor=center} [] \path[fill=black] (.anchor) circle[radius=1.5pt]; \end{forest}})
   =&\;\sum_{k=0}^{n} \sum_{|\tau_1|=n-k}\sum_{|\tau_2|=k}\begin{pmatrix} n\\ k\end{pmatrix}\omega(\tau_1)\omega(\tau_2)
   \cdot\raisebox{-5pt}{{\scriptsize \begin{forest} for tree={grow'=90, parent anchor=center,child anchor=south, l=0cm,inner ysep=2pt,edge+=thick, s sep=0mm} [[$\tau_1$][$\tau_2$]] \end{forest}}}\\
   =&\;\sum_{|\tau|=n+1}\omega(\tau)\cdot\tau,
  \end{align*}
  using Lemma~\ref{lemma:grafting}. This gives the result for $\ell=n+1$. \qed
\end{proof}
We now solve the initial value problem~\eqref{eq:coagtrees} for $\gf=\gf(t)$ in $\R\lb\mathbb T\rb$,
using Lemmas~\ref{lemma:grafting} and \ref{lemma:branchpower}.
Indeed, the solution to the Smoluchowski tree equation has three useful formulations. 
\begin{theorem}[Main result: solution]\label{thm:main}
  The solution $\gf$ to the initial value problem~\eqref{eq:coagtrees} is given by, 
  \begin{align}
    \gf&=\sum_{\tau\in\mathbb T\backslash\emptyset} \frac{t^{|\tau|}}{|\tau|!}\omega(\tau)\cdot\tau\label{eq:mainsoln}\\
    &\equiv\bigl(\exp(tB_\vee)\bigr)({\tiny \begin{forest} for tree={grow'=90, parent anchor=center,child anchor=center} [] \path[fill=black] (.anchor) circle[radius=1.5pt]; \end{forest}})\label{eq:expsoln}\\
    &\equiv(\id-t\hat B_\vee)^{-1}({\tiny \begin{forest} for tree={grow'=90, parent anchor=center,child anchor=center} [] \path[fill=black] (.anchor) circle[radius=1.5pt]; \end{forest}}).\label{eq:matchsoln}
  \end{align}
  Further, with data $\gf(0)={\tiny \begin{forest} for tree={grow'=90, parent anchor=center,child anchor=center} [] \path[fill=black] (.anchor) circle[radius=1.5pt]; \end{forest}}$,
  $\gf=\gf(t)$ solves the equation,
  \begin{equation}\label{eq:coagtreeslinear}
    \pa_t\gf=B_\vee(\gf). 
  \end{equation} 
\end{theorem}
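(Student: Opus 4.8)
The plan is to verify by direct substitution that the series \eqref{eq:mainsoln} solves the initial value problem \eqref{eq:coagtrees}, to observe that this solution is unique, and then to obtain the alternative forms \eqref{eq:expsoln}, \eqref{eq:matchsoln} and the linearised equation \eqref{eq:coagtreeslinear} as essentially immediate consequences of Lemmas~\ref{lemma:grafting} and \ref{lemma:branchpower}. Throughout I write $e$ for the grade-$0$ tree, so the prescribed data is $\gf(0)=e$ and, by Lemma~\ref{lemma:branchpower}, $B_\vee^{\,n}(e)=\gf_n$, where $\gf_n$ is the weighted sum \eqref{eq:gndef} of all grade-$n$ trees.

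\emph{Uniqueness and reduction.} I would work in $\R\la\mathbb T\ra$ viewed as a formal series algebra graded by tree-grade, and write the unknown as $\gf=\sum_{n\geqslant 0}\tfrac{t^n}{n!}a_n$. Since grafting raises grade by one, equating the two sides of \eqref{eq:coagtrees} power-by-power in $t$ forces $a_0=e$ and the recursion $a_{n+1}=\sum_{k=0}^{n}\binom{n}{k}\bigl(a_{n-k}\text{ grafted with }a_k\text{ at their roots}\bigr)$; an immediate induction then shows each $a_n$ is homogeneous of grade $n$, and the recursion determines the $a_n$ uniquely. It therefore suffices to check $a_n=\gf_n$. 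But $\gf_0=e$ (as $e$ is the unique grade-$0$ tree and $\omega(e)=1$), and the recursion for $\{\gf_n\}$ is precisely the grafting identity of Lemma~\ref{lemma:grafting}. Regrouping $\sum_{n\geqslant 0}\tfrac{t^n}{n!}\gf_n=\sum_{n}\tfrac{t^n}{n!}\sum_{|\tau|=n}\omega(\tau)\cdot\tau$ then yields \eqref{eq:mainsoln}.

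\emph{The alternative forms and the linearised equation.} These all follow from $B_\vee^{\,n}(e)=\gf_n$. First, $\bigl(\exp(tB_\vee)\bigr)(e)=\sum_{n\geqslant 0}\tfrac{t^n}{n!}B_\vee^{\,n}(e)=\sum_{n\geqslant 0}\tfrac{t^n}{n!}\gf_n=\gf$, which is \eqref{eq:expsoln}. For the linearised equation, termwise $t$-differentiation gives $\pa_t\gf=\sum_{n\geqslant 0}\tfrac{t^n}{n!}\gf_{n+1}$, whereas $B_\vee(\gf)=\sum_{n\geqslant 0}\tfrac{t^n}{n!}B_\vee(\gf_n)$ and $B_\vee(\gf_n)=B_\vee\bigl(B_\vee^{\,n}(e)\bigr)=\gf_{n+1}$ (this is the identity \eqref{eq:inductionassumption} established in the proof of Lemma~\ref{lemma:branchpower}); hence $\pa_t\gf=B_\vee(\gf)$, i.e.\ \eqref{eq:coagtreeslinear}. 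Finally, since $B_\vee$ raises grade by one and $G$ preserves grade, $\hat B_\vee=G\circ B_\vee$ raises grade by one, so $(\id-t\hat B_\vee)^{-1}=\sum_{n\geqslant 0}t^n\hat B_\vee^{\,n}$ is well defined as a formal series; a one-line induction using $B_\vee(\gf_n)=\gf_{n+1}$ together with $G(\gf_{n+1})=\tfrac{1}{n+1}\gf_{n+1}$ gives $\hat B_\vee^{\,n}(e)=\tfrac{1}{n!}\gf_n$, whence $(\id-t\hat B_\vee)^{-1}(e)=\sum_{n\geqslant 0}t^n\hat B_\vee^{\,n}(e)=\sum_{n\geqslant 0}\tfrac{t^n}{n!}\gf_n=\gf$, which is \eqref{eq:matchsoln}.

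\emph{Main obstacle.} The only genuinely substantive input is the combinatorial grafting identity of Lemma~\ref{lemma:grafting} — equivalently, the compatibility with multiplicity of the grafting and branching procedures packaged in Lemma~\ref{lemma:branchpower} — and this has already been proved. Given it, the theorem is pure bookkeeping; the sole point requiring any care is to confirm that the rearrangements of $t$-series and tree-series above are legitimate, which they are because each tree-grade receives contributions from only finitely many summands.
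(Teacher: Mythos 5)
Your proposal is correct and follows essentially the same route as the paper: the core step is matching the $t$-power coefficients of $\pa_t\gf$ and the root-grafted square via the grafting identity of Lemma~\ref{lemma:grafting}, and the forms \eqref{eq:expsoln}, \eqref{eq:matchsoln} and \eqref{eq:coagtreeslinear} then follow from Lemma~\ref{lemma:branchpower} and $\hat B_\vee^n = \tfrac{1}{n!}B_\vee^n$ on the grade-$0$ tree, exactly as in the paper's proof. Your explicit packaging of the coefficient-matching as a uniqueness-of-recursion argument is a harmless (and slightly tidier) rephrasing of the same computation.
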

\begin{proof}
  Using~\eqref{eq:gndef}, the first form of the solution can be re-written as,
  \begin{equation}\label{eq:gdefproof}
     \gf=\sum_{n\in\mathbb N_0}\frac{t^n}{n!}\gf_n.
  \end{equation}
  Now if we use Lemma~\ref{lemma:grafting}, we observe,
  \begin{equation*}
    \pa_t\gf=\sum_{n\in\mathbb N_0}\frac{t^n}{n!}\gf_{n+1}
    =\sum_{n\in\mathbb N_0}\frac{t^n}{n!}\sum_{k=0}^n\begin{pmatrix} n\\ k\end{pmatrix}
    \cdot\raisebox{-5pt}{{\scriptsize \begin{forest} for tree={grow'=90, parent anchor=center,child anchor=south, l=0cm,inner ysep=2pt,edge+=thick, s sep=0mm} [[$\gf_{n-k}$][$\gf_k$]] \end{forest}}}.
  \end{equation*}
  On the other hand using~\eqref{eq:gdefproof} again, we observe,
  \begin{equation*}
    \raisebox{-5pt}{{\scriptsize \begin{forest} for tree={grow'=90, parent anchor=center,child anchor=south, l=0cm,inner ysep=2pt,edge+=thick, s sep=1mm} [[$\gf$][$\gf$]] \end{forest}}}
    =\sum_{n\in\mathbb N_0}t^n\sum_{k=0}^n\frac{1}{(n-k)!k!}
    \cdot\raisebox{-5pt}{{\scriptsize \begin{forest} for tree={grow'=90, parent anchor=center,child anchor=south, l=0cm,inner ysep=2pt,edge+=thick, s sep=0mm} [[$\gf_{n-k}$][$\gf_k$]] \end{forest}}},
  \end{equation*}
  which matches our expression for $\pa_t\gf$ just above. The initial condition is naturally attained as $t\to0$.
  The second solution form shown follows from Lemma~\ref{lemma:branchpower}. We observe,
 \begin{equation*}
   \gf=\sum_{n\in\mathbb N_0}\frac{t^n}{n!}\gf_n
   =\sum_{n\in\mathbb N_0}\frac{t^n}{n!}B_\vee^n({\tiny \begin{forest} for tree={grow'=90, parent anchor=center,child anchor=center} [] \path[fill=black] (.anchor) circle[radius=1.5pt]; \end{forest}})
   =\bigl(\exp(tB_\vee)\bigr)({\tiny \begin{forest} for tree={grow'=90, parent anchor=center,child anchor=center} [] \path[fill=black] (.anchor) circle[radius=1.5pt]; \end{forest}}).
  \end{equation*}
 The third solution form shown follows from the identification $\hat B_\vee\coloneqq G\circ B_\vee$. For any $n\in\mathbb N_0$ we have,
 \begin{equation*}
   \hat B_\vee^n({\tiny \begin{forest} for tree={grow'=90, parent anchor=center,child anchor=center} [] \path[fill=black] (.anchor) circle[radius=1.5pt]; \end{forest}})
   =\frac{1}{n!}B_\vee^n({\tiny \begin{forest} for tree={grow'=90, parent anchor=center,child anchor=center} [] \path[fill=black] (.anchor) circle[radius=1.5pt]; \end{forest}}),
 \end{equation*}
 giving the result. Finally, using~\eqref{eq:gdefproof} and \eqref{eq:gndef} we observe,
 \begin{equation*}
  B_{\vee}(\gf)=\sum_{n\in\mathbb N_0}\frac{t^n}{n!}\cdot B_\vee(\gf_n)=\sum_{n\in\mathbb N_0}\frac{t^n}{n!}\cdot\gf_{n+1}=\pa_t\gf,
 \end{equation*}
 which also follows from the second solution form. \qed
\end{proof}
\begin{remark}[Grafting and branching equilibrium]
  An immediate consequence of Theorem~\ref{thm:main} is that the solution form $\gf$ given therein, satisfies,
  \begin{equation*}
    B_\vee(\gf)=\raisebox{-5pt}{{\scriptsize \begin{forest} for tree={grow'=90, parent anchor=center,child anchor=south, l=0cm,inner ysep=2pt,edge+=thick, s sep=1mm} [[$\gf$][$\gf$]] \end{forest}}}.
  \end{equation*}
  We can thus interpret the solution flow $\gf$ in Theorem~\ref{thm:main} as one for which, as time evolves, the processes of grafting and branching precisely match each other.
  Their actions are equivalent on such a flow, or exactly compatible.
\end{remark}
\begin{remark}[Exponential series solution]
  The solution form $\gf$ in Theorem~\ref{thm:main} is an exponential series of the form in Definition~\ref{def:expseries},
  with, for each $\tau\in\mathbb T\backslash\emptyset$: $\hat\ff_\tau=t^{|\tau|}$. 
\end{remark}

\begin{table}
  \caption{We list all the rooted planar binary trees of grade $4$. For each tree $\tau\in\mathbb T$ in the left column,
    we give its corresponding weight $\omega(\tau)$ in the second column, the total number of symmetries $\sigma(\tau)$ it possesses
    (see Remark~\ref{rmk:nonplanartrees}) in the third column and its word-coding in terms of levels in the final column.}
\label{table:treesof4}
\begin{center}
  \setlength{\baselineskip}{2\baselineskip}
\begin{tabular}{|c|c|c|c|}
\hline
$\phantom{\biggl|}$ tree $\phantom{\biggl|}$ & weight & symm.\/ & levels \\ \hline
&&&\\[-5pt]
{\tiny \begin{forest} for tree={grow'=90, parent anchor=center,child anchor=center, l=0cm,inner ysep=0cm,edge+=thick} [[][[][[][[][]]]]] \end{forest}} &
\raisebox{\depth}{{\scriptsize $\begin{pmatrix} 3 \!\!&\!\!2\!\!&\!\! 1\!\! &\!\! 0 \\ 0 \!\!&\!\!0 \!\!&\!\! 0 \!\!&\!\! 0\end{pmatrix}$}}& 3 & 1234\\
{\tiny \begin{forest} for tree={grow'=90, parent anchor=center,child anchor=center, l=0cm,inner ysep=0cm,edge+=thick} [[][[][[[][]][]]]] \end{forest}} &
\raisebox{\depth}{{\scriptsize $\begin{pmatrix} 3 \!\!&\!\!2\!\!&\!\! 1\!\! &\!\! 0 \\ 0 \!\!&\!\!0 \!\!&\!\! 1 \!\!&\!\! 0\end{pmatrix}$}}& 3 & 1243\\[5pt]
{\tiny \begin{forest} for tree={grow'=90, parent anchor=center,child anchor=center, l=0cm,inner ysep=0cm,edge+=thick} [[][[[][]][[][]]]] \end{forest}} &
\raisebox{\depth}{{\scriptsize $\begin{pmatrix} 3 \!\!&\!\!2\!\!&\!\! 0\!\! &\!\! 0 \\ 0 \!\!&\!\!1 \!\!&\!\! 0 \!\!&\!\! 0\end{pmatrix}$}}& 1 & 1323\\
{\tiny \begin{forest} for tree={grow'=90, parent anchor=center,child anchor=center, l=0cm,inner ysep=0cm,edge+=thick} [[][[[][[][]]][]]] \end{forest}} &
\raisebox{\depth}{{\scriptsize $\begin{pmatrix} 3 \!\!&\!\!2\!\!&\!\! 1\!\! &\!\! 0 \\ 0 \!\!&\!\!2 \!\!&\!\! 0 \!\!&\!\! 0\end{pmatrix}$}}& 3 & 1342\\
{\tiny \begin{forest} for tree={grow'=90, parent anchor=center,child anchor=center, l=0cm,inner ysep=0cm,edge+=thick} [[][[[[][]][]][]]] \end{forest}} &
\raisebox{\depth}{{\scriptsize $\begin{pmatrix} 3 \!\!&\!\!2\!\!&\!\! 1\!\! &\!\! 0 \\ 0 \!\!&\!\!2 \!\!&\!\! 1 \!\!&\!\! 0\end{pmatrix}$}}& 3 & 1432\\[5pt]
{\tiny \begin{forest} for tree={grow'=90, parent anchor=center,child anchor=center, l=0cm,inner ysep=0cm,edge+=thick} [[[][]][[][[][]]]] \end{forest}} &
\raisebox{\depth}{{\scriptsize $\begin{pmatrix} 3 \!\!&\!\!1\!\!&\!\! 0\!\! &\!\! 0 \\ 1 \!\!&\!\!0 \!\!&\!\! 0 \!\!&\!\! 0\end{pmatrix}$}}& 2 & 2123\\[5pt]
{\tiny \begin{forest} for tree={grow'=90, parent anchor=center,child anchor=center, l=0cm,inner ysep=0cm,edge+=thick} [[[][]][[[][]][]]] \end{forest}} &
\raisebox{\depth}{{\scriptsize $\begin{pmatrix} 3 \!\!&\!\!1\!\!&\!\! 0\!\! &\!\! 0 \\ 1 \!\!&\!\!1 \!\!&\!\! 0 \!\!&\!\! 0\end{pmatrix}$}}& 2 & 2132\\[5pt]
{\tiny \begin{forest} for tree={grow'=90, parent anchor=center,child anchor=center, l=0cm,inner ysep=0cm,edge+=thick} [[[][[][]]][[][]]] \end{forest}} &
\raisebox{\depth}{{\scriptsize $\begin{pmatrix} 3 \!\!&\!\!1\!\!&\!\! 0\!\! &\!\! 0 \\ 2 \!\!&\!\!0 \!\!&\!\! 0 \!\!&\!\! 0\end{pmatrix}$}}& 2 & 2312\\[5pt]
{\tiny \begin{forest} for tree={grow'=90, parent anchor=center,child anchor=center, l=0cm,inner ysep=0cm,edge+=thick} [[[[][]][]][[][]]] \end{forest}} &
\raisebox{\depth}{{\scriptsize $\begin{pmatrix} 3 \!\!&\!\!1\!\!&\!\! 0\!\! &\!\! 0 \\ 2 \!\!&\!\!1 \!\!&\!\! 0 \!\!&\!\! 0\end{pmatrix}$}}& 2 & 3212\\
{\tiny \begin{forest} for tree={grow'=90, parent anchor=center,child anchor=center, l=0cm,inner ysep=0cm,edge+=thick} [[[][[][[][]]]][]] \end{forest}} &
\raisebox{\depth}{{\scriptsize $\begin{pmatrix} 3 \!\!&\!\!2\!\!&\!\! 1\!\! &\!\! 0 \\ 3 \!\!&\!\!0 \!\!&\!\! 0 \!\!&\!\! 0\end{pmatrix}$}}& 3 & 2341\\
{\tiny \begin{forest} for tree={grow'=90, parent anchor=center,child anchor=center, l=0cm,inner ysep=0cm,edge+=thick} [[[][[[][]][]]][]] \end{forest}} &
\raisebox{\depth}{{\scriptsize $\begin{pmatrix} 3 \!\!&\!\!2\!\!&\!\! 1\!\! &\!\! 0 \\ 3 \!\!&\!\!0 \!\!&\!\! 1 \!\!&\!\! 0\end{pmatrix}$}}& 3 & 2431\\[5pt]
{\tiny \begin{forest} for tree={grow'=90, parent anchor=center,child anchor=center, l=0cm,inner ysep=0cm,edge+=thick} [[[[][]][[][]]][]] \end{forest}} &
\raisebox{\depth}{{\scriptsize $\begin{pmatrix} 3 \!\!&\!\!2\!\!&\!\! 0\!\! &\!\! 0 \\ 3 \!\!&\!\!1 \!\!&\!\! 0 \!\!&\!\! 0\end{pmatrix}$}}& 1 & 3231\\
{\tiny \begin{forest} for tree={grow'=90, parent anchor=center,child anchor=center, l=0cm,inner ysep=0cm,edge+=thick} [[[[][[][]]][]][]] \end{forest}} &
\raisebox{\depth}{{\scriptsize $\begin{pmatrix} 3 \!\!&\!\!2\!\!&\!\! 1\!\! &\!\! 0 \\ 3 \!\!&\!\!2 \!\!&\!\! 0 \!\!&\!\! 0\end{pmatrix}$}}& 3 & 3421\\
{\tiny \begin{forest} for tree={grow'=90, parent anchor=center,child anchor=center, l=0cm,inner ysep=0cm,edge+=thick} [[[[[][]][]][]][]] \end{forest}} &
\raisebox{\depth}{{\scriptsize $\begin{pmatrix} 3 \!\!&\!\!2\!\!&\!\! 1\!\! &\!\! 0 \\ 3 \!\!&\!\!2 \!\!&\!\! 1 \!\!&\!\! 0\end{pmatrix}$}}& 3 & 4321\\
\hline
\end{tabular}
\end{center}
\end{table}

\section{Coagulation examples}\label{sec:coagulation}
Herein we explore some of the ramifications of the binary tree solution expansions in the last section, and in particular,
make connections to cases with known explicit solutions. In this and the next section we are focused on the class of exponential
tree expansions of the form~\eqref{eq:exptreeseries} given in Definition~\ref{def:expseries}. In our coagulation application
we ally any such tree expansion with a positive function $\xi$ representing the generalised transform of the initial data. 
Hence an exponential tree series of the form~\eqref{eq:exptreeseries} in $\R\la\mathbb T\ra$ represents the following expansion
in terms of the generalised transform function $\xi$, namely,
\begin{equation}\label{eq:expseriesapp}
\ff=\sum_{\tau\in\mathbb T\backslash\emptyset} \frac{\omega(\tau)}{|\tau|!}\hat\ff_\tau\,\xi(\tau).
\end{equation}
Here, for any tree $\tau\in\mathbb T\backslash\emptyset$, the forms $\xi(\tau)$ are to be interpreted as outlined in the
introduction and at the beginning of Section~\ref{sec:trees}, so that for example,
\begin{equation*}
  \xi\Bigl(\raisebox{-4pt}{{\tiny \begin{forest} for tree={grow'=90, parent anchor=center,child anchor=center, l=0cm,inner ysep=0cm,edge+=thick} [[][[][]]] \end{forest}}}\Bigr)
  \equiv\xi\star(\xi\star\xi),
\end{equation*}
and so forth. Suppose there exist positive constants $c$ and $\Lambda<1$ such that for all $\tau\in\mathbb T\backslash\emptyset$,
we have,
\begin{equation}\label{eq:conv}
\|\hat\ff_\tau\,\xi(\tau)\|\leqslant c\,\Lambda^{|\tau|},
\end{equation}
where here, $\|\cdot\|$ represents the absolute value---in this
case of the real-valued coefficients $\hat\ff_\tau$ and functions $\xi(\tau)$. If condition~\eqref{eq:conv} holds,
the series~\eqref{eq:expseriesapp} is convergent as,
\begin{equation*}
  \|\ff\|\leqslant c\sum_{\tau\in\mathbb T\backslash\emptyset} \frac{\omega(\tau)}{|\tau|!}\,\Lambda^{|\tau|}
  \leqslant c\sum_{n\geqslant0}\Lambda^{n}.
\end{equation*}
Here we used that for a fixed grade $n$, the sum, over all trees $\tau$ of grade $n$, of $\omega(\tau)$ is $n!$.
We can prove this by induction. Assume the result to be true for $k=0,1,\ldots,n$. Recall the each of the
trees at grade `$n+1$' are constructed uniquely by the root grafting all the trees of grade `$k$' to those of grade `$n-k$'
for $k=0,1,\ldots,n$. Hence using the property of the weight character in Definition~\ref{def:weightchar}, we observe,
\begin{equation*}
  \sum_{k=0}^n\sum_{|\tau_1|=k}\sum_{|\tau_2|=n-k}\!\!\begin{pmatrix} n\\ k\end{pmatrix}\omega(\tau_1)\omega(\tau_2)
    =\sum_{k=0}^n\begin{pmatrix} n\\ k\end{pmatrix}k!\,(n-k)!, 
\end{equation*}
which equals $(n+1)!$, giving the result. Hence our algebra $\Hb=\Hb(\xi,\star)$ in Section~\ref{sec:trees},
denotes the \emph{class of convergent exponential tree series}, convergent with respect to the data $\xi$,
i.e.\/ those exponential tree series for which the condition~\eqref{eq:conv} is satisfied.

For the general frequency kernel case $K=K(y,z)$, the solution to the coagulation equation~\eqref{eq:quadnonassoc} with the data $\xi$,
represented by \eqref{eq:mainsoln} in Theorem~\ref{thm:main}, is given by,
\begin{equation}\label{eq:solnseriesapp}
\gf=\sum_{\tau\in\mathbb T\backslash\emptyset} \frac{t^{|\tau|}}{|\tau|!}\omega(\tau)\,\xi(\tau).
\end{equation}
We identify the exponential tree series coefficients in this case as $\hat\gf_\tau=t^{|\tau|}$.
This series is convergent provided $t<1$ and $\|\xi(\tau)\|\leqslant c$ for some $c>0$, for all $\tau\in\mathbb T\backslash\emptyset$.
This brief, general analysis here, for suitable kernels for which $\|\xi(\tau)\|\leqslant c$,
thus only establishes existence of such a solution \emph{locally} in time, indeed, for $t<1$.

Presently we consider the three classical coagulation cases of the constant, additive and multiplicative frequency
kernels. In preparation, as in Menon and Pego~\cite{MP}, we specialise the general transform $\mathcal H$ to the 
Bernstein transform. In this case the kernel of $\mathcal H$ is $h(s,x)\coloneqq1-\mathrm{e}^{-sx}$.
Thus, the Bernstein transform represents a desingularised Laplace transform. 
In fact the Bernstein transform exists for any positive Radon measure $\nu_t=\nu_t(\rd x)$ on $(0,\infty)$ with scalar real parameter $t$,
in the form, 
\begin{equation}\label{eq:mBernsteinT}
\gf(s;t)\coloneqq\int_0^\infty (1-\mathrm{e}^{-sx})\,\nu_t(\rd x).
\end{equation}
We assume the data $\nu_0$ is a positive Radon measure on $(0,\infty)$ and $\xi$ is the Bernstein transform of $\nu_0$.
Menon and Pego thus consider a weak formulation of \eqref{eq:abstract} for which,  
\begin{equation}\label{eq:evolweak}
  \bigl(\gf\star\gf\bigr)(s;t)=\iint\bigl(H(s,y,z)K(y,z)\bigr)\,\nu_t(\rd y)\,\nu_t(\rd z),
\end{equation}
where $H(s,y,z)\coloneqq\frac12(1-\mathrm{e}^{-sy})(1-\mathrm{e}^{-sz})$ and the double integral is over $[0,\infty)^2$.
For the precise details of the weak formulation setting, see Menon and Pego~\cite{MP}.
For this special form of $H$, we can give an alternative characterisation for $\gf\star\gf$ as follows.
Suppose we can expand $K$ in the following separable form, 
\begin{equation}\label{eq:sepform}
K(y,z)=\sum_{k,\ell\in\mathbb N_0}c_{k\ell}\,y^kz^\ell,
\end{equation}
for some constants $c_{k\ell}\geqslant0$, where $\mathbb N_0\coloneqq\mathbb N\cup\{0\}$. Since $K$ is symmetric, $c_{k\ell}=c_{\ell k}$.
Then with $\pa=\pa_s$, we have,   
\begin{equation*}
\gf\star\gf=\tfrac12\sum_{k,\ell\in\mathbb N_0}c_{k\ell}\,\bigl((-1)^k\pa^k\gf+M_k\bigr)\bigl((-1)^\ell\pa^\ell\gf+M_\ell\bigr),
\end{equation*}
where for $k\in\mathbb N_0$, the moments $M_k\coloneqq\int_0^\infty x^k\,g(x)\,\rd x$.
In this sum, when $k$ or $\ell$ are zero, the added $M_0$ term should be taken to be zero.
If the separable form expansion~\eqref{eq:sepform} is finite, then this
definition for the non-associative product `$\star$' could be computationally useful as
we only have to compute a finite number of moments. Indeed, this is the case in the classical
coagulation cases we now focus on.
\begin{example}[Constant kernel]\label{ex:constant}
Consider the constant $K=1$ frequency kernel case. The Bernstein transform $\gf=\gf(s;t)$ of $\nu_t$ satisfies,
\begin{equation*}
\pa_t\gf=-\tfrac12\gf^2.
\end{equation*}
The product `$\star$' is just the usual \emph{associative} real product, with the $-\frac12$ factor.
The solution form~\eqref{eq:matchsoln} in this case is given by
$\gf=(\id-t\hat B_\vee)^{-1}\circ\xi({\tiny \begin{forest} for tree={grow'=90, parent anchor=center,child anchor=center} [] \path[fill=black] (.anchor) circle[radius=1.5pt]; \end{forest}})$.
The action of the operator $\hat B_\vee$ in this associative case is simple. Indeed we observe that 
$\hat B_\vee\circ\xi({\tiny \begin{forest} for tree={grow'=90, parent anchor=center,child anchor=center} [] \path[fill=black] (.anchor) circle[radius=1.5pt]; \end{forest}})=-\frac12\,\xi^2$.
Then the action of $\hat B_\vee$ on this last form is to successively, additively replace each $\xi$ factor on the right by $-\frac12\,\xi^2$, giving
$(\hat B_\vee)^2\circ\xi({\tiny \begin{forest} for tree={grow'=90, parent anchor=center,child anchor=center} [] \path[fill=black] (.anchor) circle[radius=1.5pt]; \end{forest}})=\frac14\,\xi^3$.
Recall here that the grading operator is implicit in $\hat B_\vee$. We also deduce that
$(\hat B_\vee)^3\circ\xi({\tiny \begin{forest} for tree={grow'=90, parent anchor=center,child anchor=center} [] \path[fill=black] (.anchor) circle[radius=1.5pt]; \end{forest}})=-\frac18\,\xi^4$.
and so forth. Hence we observe that,
\begin{equation*}
  \gf=(\id-t\hat B_\vee)^{-1}\circ\xi({\tiny \begin{forest} for tree={grow'=90, parent anchor=center,child anchor=center} [] \path[fill=black] (.anchor) circle[radius=1.5pt]; \end{forest}})
  =\xi\bigl(1+\tfrac12t\xi\bigr)^{-1},
\end{equation*}
which is the solution in this constant kernel case.
\end{example}
\begin{example}[Additive/multiplicative case]\label{ex:addmult}
The additive $K=y+z$ and multiplicative $K=yz$ frequency kernel cases
can be considered together for the following reason. The modified Bernstein transform $\gf^\ast$ of $\nu_t$
is similar to the Bernstein transform except that the kernel $h(s,x)\coloneqq x(1-\mathrm{e}^{-sx})$.
Respectively in the additive and multiplicative cases, the Bernstein transform $\gf$ of $\nu_t$
and the modified Bernstein transform $\gf^\ast$ of $\nu_t$, satisfy,
\begin{equation*}
  \pa_t\gf=\gf\pa\gf-\gf \qquad\text{and}\qquad \pa_t\gf^\ast=\gf^\ast\pa\gf^\ast.
\end{equation*}
In the additive case we have normalised the constant first moment $M_1=1$.
If we set $\hf\coloneqq\mathrm{e}^{t}\gf$, then $\pa\hf=\mathrm{e}^{-t}\hf\pa\hf$.
If we set $\kappa\coloneqq1-\mathrm{e}^{-t}$ and $\hf^\ast(s;\kappa)\coloneqq\hf(s,t)$, then $\hf^\ast$ satisfies,
$\pa_\kappa\hf^\ast=\hf^\ast\pa\hf^\ast$. Hence it is sufficient to focus on the case,
\begin{equation}\label{eq:suffcase}
  \pa_t\gf=\gf\pa\gf,
\end{equation}
with $\gf(s;0)=\xi(s)$. It is well-known that for this case, corresponding to the multiplicative frequency kernel case for $\gf^\ast$,
that after a suitable normalisation, the gelation time occurs at $t=1$. Herein we only consider the solution up to gelation,
though a solution exists beyond that time, see item (ii) in Section~\ref{sec:discussion}. Note that for the rescaled additive case for $\hf^\ast$,
the gelation time corresponds to $\kappa=1$ which translates back to $t=\infty$. See Menon and Pego~\cite{MP} for more details.
The solution to the inviscid Burgers equation~\eqref{eq:suffcase} is uniquely obtained up to $t=1$ via characteristics as,
\begin{equation}\label{eq:suffcasesoln}
     \gf\circ s=\xi\circ(\id-t\xi)^{\circ(-1)}\circ s, 
\end{equation}
where the centre term is a compositional inverse. 
For the equation~\eqref{eq:suffcase} with solution $\gf$ and data $\xi$, we have,
\begin{equation*}
\gf\star\gf=\gf\pa\gf,
\end{equation*}
which is non-associative. We know the solution form in this context is any of the forms~\eqref{eq:mainsoln}--\eqref{eq:matchsoln} in Theorem~\ref{thm:main},
there is a close visual match to \eqref{eq:matchsoln} in particular.
By uniqueness the solution forms~\eqref{eq:suffcasesoln} and~\eqref{eq:mainsoln} are one and the same---one could simply iterate~\eqref{eq:suffcase}
as we did for~\eqref{eq:quadnonassoc}.
\end{example}
\begin{remark}[Explicit match]\label{rmk:explicit}
  We can be more explicit about the solution match in Example~\ref{ex:addmult} as follows.
  If we set $\hf\coloneqq(\id-t\xi)^{\circ(-1)}$, then the solution~\eqref{eq:suffcasesoln} can be expressed in
  the form $\gf=\xi\circ\hf$ with $\hf$ satisfying $\hf=\id+t\,\xi\circ\hf$, or more explicitly, $\hf\circ s=s+t\,\xi\circ h\circ s$.
  Naturally we have $\gf_0=\xi$ and $\hf_0=\id$. Note that $\pa_t\hf=\xi\circ\hf+t\,(\pa_t\hf)(\pa\xi\circ\hf)$ or
  equivalently $\pa_t\hf=(\xi\circ\hf)(\id-t\,\pa\xi\circ\hf)^{-1}$, where the second factor is a reciprocal. Thus, since $\pa_t\gf=(\pa_t\hf)(\pa\xi\circ\hf)$,
  we observe that,
  \begin{equation*}
   \pa_t\gf=(\xi\circ\hf)\bigl(\pa\xi\circ\hf+t\,(\pa\xi\circ\hf)^2+t^2\,(\pa\xi\circ\hf)^3+\cdots\bigr).
  \end{equation*}
  We can use this form to compute the Taylor series expansion for $\gf$ in powers of $t$. Thus for example, we observe,
  \begin{align*}
    \pa_t\gf\vert_{t=0}=&\;\xi\pa\xi,\\
    \pa_t^2\gf\vert|_{t=0}=&\;\Bigl((\xi\circ\hf)(\pa\xi\circ\hf)^2\\
                          &\;+(\xi\circ\hf)^2\bigl(\pa^2\xi\circ\hf+(\pa\xi\circ\hf)^2\bigr)+\mathcal O(t)\Bigr)\vert_{t=0}\\
                         =&\;(\xi\pa\xi)\pa\xi+\xi\pa(\xi\pa\xi),
  \end{align*}
  and so forth.
\end{remark}
\begin{remark}[Rank-one analytic Grassmannian]
  In Doikou \textit{et al.\/} \cite{DMSW-coagulation}, we show that the additive and multiplicative kernel
  cases can be described in terms of the Fa\`a di Bruno composition algebra of analytic exponential power series.
  See Figueroa \textit{et al.\/} \cite{FG-BV} and Gessel~\cite{Gessel} for more details.
  Indeed, in these cases, inspired by the result of Byrnes~\cite{Byrnes} and Byrnes and Jhemi~\cite{BJ} on Riccati partial differential equations,
  in Doikou \textit{et al.\/} \cite{DMSW-coagulation}, we show that the solution $\gf$ has a representation as a rank-one analytic Grassmannian flow.
  See Section~\ref{sec:discussion} for more details on Grassmannian flows and the extension to the general frequency kernel case.
\end{remark}
\begin{remark}[Non-planar trees]\label{rmk:nonplanartrees}
  With the exception of Examples~\ref{ex:constant} and \ref{ex:addmult}, and Remark~\ref{rmk:explicit}, for full generality,
  we assume throughout that the non-associative product `$\star$' is non-commutative, and thus the rooted binary trees we consider are planar. 
  In Smoluchowski's equation~\eqref{eq:Smoluchowski} the fields are scalar and thus the resulting product `$\star$' is commutative.
  This means that many of the terms $\xi(\tau)$, for trees $\tau$ of the same grade, are equivalent/collapse together.
  Indeed, we can represent any exponential tree series expansion on the set of non-planar binary trees as opposed to planar ones.
  Thus, for example, commutativity of `$\star$' means that we do not distinguish between the trees,
  \begin{equation*}
    {\tiny \begin{forest} for tree={grow'=90, parent anchor=center,child anchor=center, l=0cm,inner ysep=0cm,edge+=thick} [[][[][]]] \end{forest}}
    \qquad\text{and}\qquad
    {\tiny \begin{forest} for tree={grow'=90, parent anchor=center,child anchor=center, l=0cm,inner ysep=0cm,edge+=thick} [[[][]][]] \end{forest}},
  \end{equation*}
  and the only two distinct rooted non-planar binary trees of grade $3$ are,
  \begin{equation*}
    {\tiny \begin{forest} for tree={grow'=90, parent anchor=center,child anchor=center, l=0cm,inner ysep=0cm,edge+=thick} [[][[][[][]]]] \end{forest}}\qquad\text{and}\qquad
    {\tiny \begin{forest} for tree={grow'=90, parent anchor=center,child anchor=center, l=0cm,inner ysep=0cm,edge+=thick} [[[][]][[][]]] \end{forest}}.
  \end{equation*}
  We can think of the set of rooted non-planar binary trees as a set of equivalence classes of rooted planar binary trees,
  where we choose a single representative for the classes of planar binary trees that are equivalent in the non-planar setting.
  For example, in the case of the two grade $2$ planar trees shown above, we might choose the left one to be the representative.
  We have given two possible representatives in the grade $3$ case above. In general for example, we might choose the representative
  to be the planar binary tree with the lowest word-order code---see Section~\ref{sec:simulation}. We can determine which planar
  trees are equivalent in the non-planar setting by systematically, successively `twisting' (or `rotating') all the vertices present in a given planar tree,
  to see, if by doing so, they can be transformed into our given representative. For example, for the right-hand grade $2$ planar above,
  twisting the top left vertex does not alter the planar tree---indeed this is true for any free vertex ends of any planar trees
  (i.e.\/ vertices with no further higher attachments). However, if we twist/rotate the bottom vertex, we obtain the grade $2$ tree on the left.
  In the case of the grade $3$ planar tree on the right, we see that twisting any of the vertices it contains, does not alter it.
  However if we examine all the other trees of grade $3$ in Table~\ref{table:treesupto3}, we see that we can twist the vertices at
  the bottom as well as the first level up, to transform them into the tree of grade $3$ shown on the left above. A careful examination
  of Table~\ref{table:treesof4} reveals that there are three non-planar trees of grade $4$, a further quick enumeration reveals there are six at grade $5$, and so forth.
  This substantial reduction in the number of trees required to represent the exponential tree series solution \eqref{eq:mainsoln} in Theorem~\ref{thm:main},
  has important implications for numerical approximations, as we discuss in Section~\ref{sec:simulation}.
  In Tables~\ref{table:treesupto3} and \ref{table:treesof4} we give a column entitled `symm.' which indicates the total number
  of symmetries $\sigma(\tau)$ the \emph{planar} trees $\tau\in\mathbb T$ shown have. Thus for example, the trees of grade $2$ in
  Table~\ref{table:treesupto3}, can each be transformed into each other in the non-planar setting by vertex twisting.
  Thus for either of these trees $\tau$, each generates $2^1$ trees (including itself) in the non-planar setting and we set $\sigma(\tau)=1$.
  The number of non-planar trees each generates is $2^{\sigma(\tau)}$. Now consider the trees of grade $3$ in Table~\ref{table:treesupto3}.
  The middle tree in the list shown cannot be transformed into any other and thus $\sigma(\tau)=0$ and the number of non-planar trees, including itself,
  it can generate is $2^{\sigma(\tau)}=1$. The four other trees $\tau$ of grade $3$ shown can each be transformed into each other,
  and so for each of them, $\sigma(\tau)=2$, and the number of non-planar trees each can generate by vertex twisting, including themselves, is $2^{\sigma(\tau)}=4$.
  The first tree $\tau$ in Table~\ref{table:treesof4} has $\sigma(\tau)=3$ as, including itself, it can generate $8$ non-planar binary tree copies.
  And so forth. 
\end{remark}
\begin{remark}[Positivity]\label{rmk:positivity}
  An important property of solutions to Smoluchowski's equation~\eqref{eq:Smoluchowski} is that for positive initial data,
  the solutions should be positive thereafter while they exist. In the case of the constant, additive and multiplicative frequency kernel cases,
  Menon and Pego~\cite{MP} establish positive Radon measure-valued solutions provided the initial measure is positive.
  This achieved by establishing that the Bernstein transform of the measure solution, or its derivative, is completely monotone in the sense of Feller~\cite[XIII.4]{Feller}.
  Menon and Pego's results cover Examples~\ref{ex:constant} and \ref{ex:addmult} above. We do not pursue positivity in this sense further here,
  however a valuable future investigation would be to determine or characterise the conditions under which the general solution~\eqref{eq:mainsoln} for $\gf$
  possesses complete monotonicity or a similar property guaranteeing positive Radon measure-valued solutions. 
\end{remark}
\begin{remark}[Well-posedness]\label{rmk:existence}
  Though we have shown, that if condition~\eqref{eq:conv} holds, there exists a convergent exponential tree series solution,
  we have not studied in detail when condition~\eqref{eq:conv} does hold. For example, as mentioned in Example~\ref{ex:addmult},
  it is well-known, see Menon and Pego~\cite{MP}, that in the multiplicative case a positive Radon measure-valued solution exists up to the gelation time
  which is governed by the initial second moment $M_2(0)$, which we can normalise to be one. This result can be found in McLeod~\cite{McLeod1962a,McLeod1962b,McLeod1962c}.
  Comprehensive accounts of existing well-posedness results for the Smoluchowski equation can be found in da Costa~\cite{daC} and Dubovskii~\cite{Dubovskii}.
  They include well-posedness for the case of a frequency kernel $K(y,z)\leqslant k_0(1+y+z)$ for some constant $k_0>0$, and
  a striking result by Carr and Da Costa~\cite{CdaC} establishing instantaneous gelation if there exist constants
  $\alpha$, $\beta$ with $\beta>\alpha>1$ such that $k_0\,(y^\alpha+z^\beta)\leqslant K(y,z)\leqslant k_1\,(yz)^\beta$ for some positive constants $k_0$ and $k_1$.
  Further general rigorous well-posedness results are given in Escobedo \textit{et al.\/} \cite{EMP} and Escobedo \textit{et al.\/} \cite{ELMP} for coagulation-fragmentation models.
  Probabilistic approaches to solutions include that by Deaconu and Tanr\'e~\cite{DT} for the three classical kernels, and the characterisation
  of `eternal' solutions, existing for $t\in\R$, by Bertoin~\cite{Bertoin} for the additive kernel.
  Such well-posedness results are very kernel-specific. The frequency kernel $K$ is an integral part of the product `$\star$' and thus
  solution series expansion~\eqref{eq:solnseriesapp} through the terms $\xi(\tau)$. A general investigation into
  well-posedness results that can be established through the analysis of the terms $\xi(\tau)$ in~\eqref{eq:solnseriesapp} is a definitive worthwhile future endeavour.
  For example, we know from Example~\ref{ex:addmult}, that if $\gf$ is the solution to the additive kernel case, 
  then $\hf^\ast(s;t)=(1-t)^{-1}\gf(s;-\log(1-t))$ is the solution to the multiplicative kernel case. In this instance, in the multiplicative case,
  we observe there must be a natural renormalisation of the solution series expansion~\eqref{eq:solnseriesapp} generating a pre-factor $(1-t)^{-1}$,
  thus explicitly exposing the gelation time.
\end{remark}

\section{Numerical simulation method}\label{sec:simulation}
There are two parts to this section. In the first part we show how the solution flow~\eqref{eq:mainsoln}--\eqref{eq:matchsoln} 
can be formulated as a linearised flow. In the second part we establish a new practical numerical simulation method based
on the binary tree expansion solution~\eqref{eq:mainsoln}.

First, focusing on a linearised flow formulation, we derive two equivalent prescriptions. 
Though not strictly required here, it is useful to introduce the budding operator. We discuss it more generally in item (v)
in Section~\ref{sec:discussion}. Here it is a re-interpretation of the branching operator. 
\begin{definition}[Budding operator]\label{def:budding}
  For any $\tau^\prime\in\mathbb T$, we define the \emph{budding operator} $\mathcal B_{\tau^\prime}\colon\R\la\mathbb T\ra\to\R\la\mathbb T\ra$
  as the operator that acts on any tree $\tau\in\mathbb T$ by successively, additively performing the following operation to
  each free end of $\tau$. The operator $\mathcal B_{\tau^\prime}$ extends the existing free end to a new branch on the right,
  and attaches, via a new branch, the tree $\tau^\prime$ on the left. So a new vertex is created where the free end was, and the
  left branch of the new vertex has $\tau^\prime$ attached, while the right branch is a new free end. 
\end{definition}
\begin{example}
For example, for any $\tau^\prime\in\mathbb T$ we have, 
\begin{equation*}
\mathcal B_{\tau^\prime}(\bullet)=\raisebox{-1pt}{{\tiny \begin{forest} for tree={grow'=90, parent anchor=center,child anchor=south, l=0cm,inner ysep=1pt,edge+=thick, s sep=0mm} [[$\tau^\prime$][]] \end{forest}}}
\quad\text{and}\quad 
\mathcal B_{\tau^\prime}\bigl(\raisebox{-3pt}{{\tiny \begin{forest} for tree={grow'=90, parent anchor=center,child anchor=center, l=0cm,inner ysep=0pt,edge+=thick, s sep=1mm} [[][]] \end{forest}}}\bigr)
=\raisebox{-6pt}{{\tiny \begin{forest} for tree={grow'=90, parent anchor=center,child anchor=center, l=0cm,inner ysep=0mm,edge+=thick} [[[$\tau^\prime\,\,\,\,\,\,$][$\phantom{\tau}$]][]] \end{forest}}}
+\raisebox{-6pt}{{\tiny \begin{forest} for tree={grow'=90, parent anchor=center,child anchor=center, l=0cm,inner ysep=0mm,edge+=thick} [[][[$\tau^\prime\,\,\,\,\,\,$][$\phantom{\tau}$]]] \end{forest}}}.
\end{equation*}
\end{example}
Importantly, we observe that $\mathcal B_{\bullet}\equiv B_\vee$. Further, we can extend $\mathcal B_{\tau^\prime}$ linearly so that
$\mathcal B_{a\cdot\tau_1^\prime+b\cdot\tau_2^\prime}=a\cdot\mathcal B_{\tau_1^\prime}+b\cdot\mathcal B_{\tau_2^\prime}$ for any $a,b\in\R$.
As in Example~\ref{ex:adapted}, we set $\hat{\mathcal B}\coloneqq G\circ\mathcal B$. Naturally we also have $\hat{\mathcal B}_{\bullet}\equiv\hat B_\vee$.
We now present a linearised flow prescription in the context of $\R\la\mathbb T\ra$. 
\begin{prescription}[Linearised flow]\label{prescription:linearisedflow}
  Suppose $\pf\in\R\la\mathbb T\ra$, the linear operator $\mathcal Q\colon\R\la\mathbb T\ra\to\R\la\mathbb T\ra$,
  and the exponential tree series $\gf\in\R\la\mathbb T\ra$ satisfy,
  $\pf(0)={\tiny \begin{forest} for tree={grow'=90, parent anchor=center,child anchor=center} [] \path[fill=black] (.anchor) circle[radius=1.5pt]; \end{forest}}$,
  and $\mathcal Q(0)=\id$, and the linear system of equations,
  \begin{equation}\label{eq:linearisedflow}
    \pa_t\pf=0,\qquad \pa_t\mathcal Q=-\hat{\mathcal B}_{\pf}\qquad\text{and}\qquad\pf=\mathcal Q\gf.
  \end{equation}
\end{prescription}
We can solve the first two equations giving,
\begin{equation}\label{eq:linearsolns}
\pf={\tiny \begin{forest} for tree={grow'=90, parent anchor=center,child anchor=center} [] \path[fill=black] (.anchor) circle[radius=1.5pt]; \end{forest}}
\qquad\text{and}\qquad 
\mathcal Q=\bigl(\id-t\hat{\mathcal B}_{{\tiny \begin{forest} for tree={grow'=90, parent anchor=center,child anchor=center} [] \path[fill=black] (.anchor) circle[radius=1.5pt]; \end{forest}}}\bigr)\equiv\bigl(\id-t\hat B_\vee\bigr).
\end{equation}
Naturally, we observe that the linearised flow Prescription~\ref{prescription:linearisedflow}, with \eqref{eq:linearsolns} in hand,
is just a re-writing of the solution flow~\eqref{eq:mainsoln}--\eqref{eq:matchsoln}, however the interpretation of 
Prescription~\ref{prescription:linearisedflow} is useful.
Recall that `${\tiny \begin{forest} for tree={grow'=90, parent anchor=center,child anchor=center} [] \path[fill=black] (.anchor) circle[radius=1.5pt]; \end{forest}}$' in $\mathbb T$
represents the data function $\xi=\xi(s)$ in the coagulation context. Note that $\mathcal Q=\bigl(\id-t\hat B_\vee\bigr)$ is a linear endomorphism on $\R\la\mathbb T\ra$,
and thus the relation $\pf=\mathcal Q\gf$ in~\eqref{eq:linearisedflow} is indeed a linear equation for $\gf$, whose solution is \eqref{eq:matchsoln}.
To interpret this further we need to impose an \emph{ordering} on the set of rooted binary trees $\mathbb T$. 
Recall Tables~\ref{table:treesupto3} and \ref{table:treesof4} and focus on the final column labelled `levels'.
Roughly, we observe from the ordered listing of the trees shown in the tables, that we order them according to
more foliage on the right---this being also interpreted at each vertex---through to more foliage on the left. 
The words in the final column `levels' provide a coding for the trees represented, and, can either be read left to right,
or from the root up. Each level refers to the vertical position of the vertices of the trees shown.
\begin{procedure}[Tree order]\label{proc:treeorder}
An ordering for the set of rooted planar binary trees at each grade can be established as follows. 
Let us focus on the grade $4$ trees in Table~\ref{table:treesof4}. Interpreting left to right, consider the
tree labelled $3212$. Scanning from the left, the first vertex we encounter is at level $3$, the next at level $2$, then
level $1$ and then level $2$ again. For the tree labelled $2341$, the first vertex we encounter scanning from the left
is at level~$2$. We then follow those connected vertices that head upwards, so we get $34$, before, having exhausted that branch,
we proceed to the next vertex at level $1$. And so forth. The length of the word-code naturally corresponds to the grade of the tree.
The interpretation of the `levels' word-coding from the root up, proceeds as follows. Consider the tree labelled $3212$.
The $1$ indicates the first bottom vertex. Each word-code of any length corresponding to a tree with at least one vertex, has only a single $1$, naturally.
The two $2$'s straddling the `$1$' indicate that there are vertices attached to each of the ends of the bottom vertex.
Each word-code of any length has either none, one or two $2$'s. The case of no `$2$' means that the tree in question is just the grade $1$ tree.
The case of only one `$2$' means that only one branch of the root vertex has a vertex attached. If the `$2$' occurs somewhere to the right of the `$1$' in the word-code,
the vertex is attached to the right branch. If the `$2$' occurs somewhere to the left of the `$1$', the vertex is attached to the left branch.
In the word-code $3212$, the single $3$---there can only be one further digit for a grade $4$ tree---means that there is a single vertex at level $3$.
That the `$3$' is to the left of the left `$2$' means that the vertex is attached to the left branch of the left vertex at level $2$.
As another example consider the tree corresponding to the word-code $2431$. The `$1$' denotes the root vertex.
The single `$2$' to the left of the `$1$' indicates there is only one vertex attached to the left branch of the root vertex.
The single `$3$' to the right of the `$2$' indicates there is a single vertex attached to the right branch of the single vertex at level $2$.
The single `$4$' to the left of the `$3$', indicates there is a single vertex at level $4$ attached to the left branch of the vertex at level $3$.
And so forth. The set of rooted planar binary trees is then ordered by the numerical ordering, within the set of integers, of the word-codes. 
See the ordered lists in Tables~\ref{table:treesupto3} and \ref{table:treesof4}, and Example~\ref{ex:wordordering} below.
\end{procedure}
\begin{example}\label{ex:wordordering}
  Two further illustrative examples of the word-coding are as follows,
  \begin{equation*}
    {\tiny \begin{forest} for tree={grow'=90, parent anchor=center,child anchor=center, l=0cm,inner ysep=0cm,edge+=thick} [[[][]][[[][]][[][]]]] \end{forest}}=21323\quad\text{and}\quad
    {\tiny \begin{forest} for tree={grow'=90, parent anchor=center,child anchor=center, l=0cm,inner ysep=0cm,edge+=thick} [[[[][]][[[][]][[][[][]]]]][[][]]] \end{forest}}=32434512.
  \end{equation*}
\end{example}
\begin{remark}
  The word-code order of rooted planar binary trees above is closely related to their coding by permutations given in the original paper by Loday and Ronco~\cite{LR}.
  Also see Aguiar and Sottile~\cite{AguiarSottile}, Arcis and M\'arquez~\cite{Arcis} and Chatel and Pilaud~\cite{CP}.
\end{remark}
Consider the class of exponential tree series $\ff$ of the form~\eqref{eq:exptreeseries}, which we henceforth restrict ourselves to.
We order the terms in any such series according to the tree ordering outlined in Procedure~\ref{proc:treeorder}.
With this in hand, we can represent any such series by the vector of the coefficients $\hat\hf_\tau\coloneqq\omega(\tau)\,\hat\ff_\tau$,
associated with each tree $\tau\in\mathbb T\backslash\emptyset$, contained therein. In other words, we represent any such $\ff$ by the vector,
\begin{equation}\label{eq:expvector}
\hat\ff=(\hat\hf_{0};\hat\hf_1;\hat\hf_{12},\hat\hf_{21};\hat\hf_{123},\hat\hf_{132},\hat\hf_{212},\hat\hf_{231},\hat\hf_{321};\hat\hf_{1234},\cdots)^{\mathrm{T}},
\end{equation}
where the sub-indices are the word-codings of the corresponding trees. We use semi-colons to separate coefficients associated with trees of the same grade.
The basis elements are the corresponding trees $\tau$ with the reciprocal of the grade factorial factors `$1/|\tau|!$'. 
Let us now consider the action of the branching operator $B_\vee$ from Definition~\ref{def:branchingoperator} in the context of such coefficient vectors $\hat\ff$.
Since $B_\vee$ successively, additively attaches a branch 
`\raisebox{-1pt}{{\tiny \begin{forest} for tree={grow'=90, parent anchor=center,child anchor=center, l=0cm,inner ysep=0cm,edge+=thick} [[][]] \end{forest}}}'
to each free end of any tree $\tau\in\mathbb T$, its action on the word-coding of binary trees can be described as follows. For trees of lower grades, we observe:
\begin{align*}
  B_\vee(1)=&\;12+21,\\
  B_\vee(12+21)=&\;\prescript{\uparrow}{}{1}\,\vert\,\prescript{\uparrow}{}{2}^{\uparrow}+\prescript{\uparrow}{}{2}^{\uparrow}\,\vert\,1^{\uparrow}\\
  =&\;123+132+212+212+231+321.  
\end{align*}
Here, in the second example, we separate the digits of the word-code by a `$\vert$', and the arrows `$\uparrow$' indicate where a free branch of the
corresponding tree is and thus where we can successively, additively add a next level integer. Consider, for example, the tree represented by the word-code $12$.
This contains a vertex at level $1$ and attached to its right branch is a level $2$ vertex.
We can always attach a vertex `\raisebox{-1pt}{{\tiny \begin{forest} for tree={grow'=90, parent anchor=center,child anchor=center, l=0cm,inner ysep=0cm,edge+=thick} [[][]] \end{forest}}}'
to either end (left or right) of any trees as these are always free branches. For the word-code $12$ let us scan free branches right-to-left. The form $\prescript{\uparrow}{}{2}^{\uparrow}$ indicates
we can attach a vertex `\raisebox{-1pt}{{\tiny \begin{forest} for tree={grow'=90, parent anchor=center,child anchor=center, l=0cm,inner ysep=0cm,edge+=thick} [[][]] \end{forest}}}'
at level $3$ to the right branch of the level $2$ vertex, giving $123$. And since the integer digit to the immediate left of the $2$ is less in integer-order than the $2$,
there must be a free left-branch of the vertex at level $2$ in $12$, thus generating $132$. Then the form $\prescript{\uparrow}{}{1}$ indicates, since the integer digit to
the immediate right is greater than $1$ in integer-order, that there is not a free branch to the right for this level $1$ vertex. However, there is free branch on the left to which
we can add a level $2$ vertex, generating the tree $212$. The interpretation of the form $\prescript{\uparrow}{}{2}^{\uparrow}\,\vert\,1^{\uparrow}$ is now straightforward.
Note, starting with $12+21$ in tree order, when we consider attaching vertices to free branches of $12$ and $21$, we can do so
successively, additively from right-to-left. The order of the trees shown as the result of applying $B_\vee$ to `$12+21$' above is then a sum of
trees in tree-order. Though scanning right-to-left generates the trees at the next grade in tree order for a given word-code, this property does not generally
propagate across different word-codes in tree order. Indeed, let us consider some more examples, as follows. We observe,
\begin{align*}
  B_\vee(123)=&\;\prescript{\uparrow}{}{1}\,\vert\,\prescript{\uparrow}{}{2}\,\vert\,\prescript{\uparrow}{}{3}^{\uparrow}=1234+1243+1323+2123,\\
  B_\vee(132)=&\;\prescript{\uparrow}{}{1}\,\vert\,\prescript{\uparrow}{}{3}^{\uparrow}\,\vert\,2^{\uparrow}=1323+1342+1432+2132,\\ 
  B_\vee(212)=&\;\prescript{\uparrow}{}{2}^{\uparrow}\,\vert\,1\,\vert\,\prescript{\uparrow}{}{2}^{\uparrow}=2123+2132+2312+3212,\\
  B_\vee(231)=&\;\prescript{\uparrow}{}{2}\,\vert\,\prescript{\uparrow}{}{3}^{\uparrow}\,\vert\,1^{\uparrow}=2312+2341+2431+3231,\\
  B_\vee(321)=&\;\prescript{\uparrow}{}{3}^{\uparrow}\,\vert\,2^{\uparrow}\,\vert\,1^{\uparrow}=3212+3231+3421+4321.         
\end{align*}
For the example $32434512$ we had above, we observe,
\begin{equation*}
  B_\vee(32434512)=
  \prescript{\uparrow}{}{3}^{\uparrow}\,\vert\,2\,\vert\,\prescript{\uparrow}{}{4}^{\uparrow}\,\vert\,3\,\vert\,
  \prescript{\uparrow}{}{4}\,\vert\,\prescript{\uparrow}{}{5}^{\uparrow}\,\vert\,1\,\vert\,\prescript{\uparrow}{}{2}^{\uparrow}.
\end{equation*}
Note that as we consider each digit in $32434512$, free branches only occur when neighbouring digits are smaller in integer-order, as
indicated by the $\uparrow$'s above.

Now consider an exponential series represented by the vector~\eqref{eq:expvector}.
The action of the graded branching operator $\hat B_\vee$ on $\hat\ff$, i.e.\/ $\hat B_\vee(\hat\ff)$, is given by,
\begin{align*}
  \hat B_\vee&\bigl((\hat\hf_{0};\hat\hf_1;\hat\hf_{12},\hat\hf_{21};\hat\hf_{123},\hat\hf_{132},\hat\hf_{212},\hat\hf_{231},\hat\hf_{321};\hat\hf_{1234},\cdots)^{\mathrm{T}}\bigr)\\ 
  &=(0;\hat\hf_{0};\hat\hf_1,\hat\hf_1;\hat\hf_{12},\hat\hf_{12},\hat\hf_{12}+\hat\hf_{21},\hat\hf_{21},\hat\hf_{21};\hat\hf_{123},\cdots)^{\mathrm{T}}.
\end{align*}
We can represent the action of $\hat B_\vee$ on $\hat\ff$ as a lower triangular matrix operator on vectors $\hat\ff$ as follows,
\begin{equation*}
\hat B_{\mathrm{m}}\,\hat\ff=\begin{pmatrix}
  0 & 0 & 0 & 0 & 0 & 0 & 0 & \cdots \\
  1 & 0 & 0 & 0 & 0 & 0 & 0 & \cdots \\
  0 & 1 & 0 & 0 & 0 & 0 & 0 & \cdots \\
  0 & 1 & 0 & 0 & 0 & 0 & 0 & \cdots \\  
  0 & 0 & 1 & 0 & 0 & 0 & 0 & \cdots \\
  0 & 0 & 1 & 0 & 0 & 0 & 0 & \cdots \\
  0 & 0 & 1 & 1 & 0 & 0 & 0 & \cdots \\
  0 & 0 & 0 & 1 & 0 & 0 & 0 & \cdots \\
  0 & 0 & 0 & 1 & 0 & 0 & 0 & \cdots \\
  0 & 0 & 0 & 0 & 1 & 0 & 0 & \cdots \\
  \vdots & \vdots & \vdots & \vdots & \vdots & \vdots & \vdots & \vdots 
\end{pmatrix}
\begin{pmatrix}
  \hat\hf_{0}\\ \hat\hf_1\\ \hat\hf_{12}\\ \hat\hf_{21}\\ \hat\hf_{123}\\ \hat\hf_{132}\\ \hat\hf_{212}\\ \hat\hf_{231}\\ \hat\hf_{321}\\ \hat\hf_{1234} \\ \vdots 
\end{pmatrix},
\end{equation*}
where $\hat B_{\mathrm{m}}$ denotes the infinite matrix form of the graded branching operator. On the set of vectors $\hat\ff$ corresponding to the
class of exponential tree series, $\hat B_{\mathrm{m}}$ and all its powers are well-defined. Thus so is $(\id-t\hat B_{\mathrm{m}})^{-1}$ for sufficiently small
values of $t$. In particular, note that in the linearised flow in Prescription~\ref{prescription:linearisedflow}, 
the vector of coefficients corresponding to $\pf$ is $\hat\pf=(1;0;0,0;0,\ldots)^{\mathrm{T}}$. The successive action of $\hat B_{\mathrm{m}}$ is given by,
\begin{align*}
  \hat B_{\mathrm{m}}\,(1;0;0,0;0,\ldots)^{\mathrm{T}}&=(0;1;0,0;0,\ldots)^{\mathrm{T}},\\
  \hat B_{\mathrm{m}}^2(1;0;0,0;0,\ldots)^{\mathrm{T}}&=(0;0;1,1;0,\ldots)^{\mathrm{T}},\\
  \hat B_{\mathrm{m}}^3(1;0;0,0;0,\ldots)^{\mathrm{T}}&=(0;0;0,0;1,1,2,1,1;0,\ldots)^{\mathrm{T}},
\end{align*}
and so forth, as expected.

Let $\Vb$ denote the vector space of exponential tree series \eqref{eq:exptreeseries} represented by vectors of the form~\eqref{eq:expvector}. 
We can now give a second linearised flow prescription corresponding to that in Prescription~\ref{prescription:linearisedflow},
in the context of $\Vb$, as follows. 
\begin{prescription}[Linearised flow: reprise]\label{prescription:Grassmannian}
  Suppose that the linear operators $\mathcal P, \mathcal Q, \mathcal G\colon\Vb\to\Vb$ satisfy $\mathcal P(0)=\mathcal Q(0)=\id$,
  and the linear system of equations,
  \begin{equation*}
    \pa_t\mathcal P=O,\quad \pa_t\mathcal Q=-\hat B_{\mathrm{m}}\,\mathcal P\quad\text{and}\quad\mathcal P=\mathcal G\,\mathcal Q.
  \end{equation*}
\end{prescription}
For this linearised flow, the first two equations imply,
\begin{equation*}
\mathcal P=\id \qquad\text{and}\qquad \mathcal Q=\id-t\hat B_{\mathrm{m}}.
\end{equation*}
Then for sufficiently small $t$ we have $\mathcal G=(\id-t\hat B_{\mathrm{m}})^{-1}$.
Relating this linearised flow to the linearised flow Prescription~\ref{prescription:linearisedflow}, we naturally observe that
$\pf=\mathcal P({\tiny \begin{forest} for tree={grow'=90, parent anchor=center,child anchor=center} [] \path[fill=black] (.anchor) circle[radius=1.5pt]; \end{forest}})$
and the linearised flow solution $\gf$ is given by
$\gf=\mathcal G({\tiny \begin{forest} for tree={grow'=90, parent anchor=center,child anchor=center} [] \path[fill=black] (.anchor) circle[radius=1.5pt]; \end{forest}})$,
where recall `{\tiny \begin{forest} for tree={grow'=90, parent anchor=center,child anchor=center} [] \path[fill=black] (.anchor) circle[radius=1.5pt]; \end{forest}}'
represents the data $\xi({\tiny \begin{forest} for tree={grow'=90, parent anchor=center,child anchor=center} [] \path[fill=black] (.anchor) circle[radius=1.5pt]; \end{forest}})\in\Hb$.
We can also see this from another perspective. The solution \eqref{eq:mainsoln} for $\gf$ is an exponential series of the form \eqref{eq:exptreeseries} with $\hat\hf_\tau=\omega(\tau)\,t^{|\tau|}$.
Using the properties of the weight character $\omega$ from Definition~\ref{def:weightchar}, it is straightforward to show that the corresponding vector $\hat\ff$ of the form \eqref{eq:expvector},
satisfies $\pa_t\hat\ff=B_{\mathrm{m}}\hat\ff$. Note here, $B_{\mathrm{m}}$ is the non-graded form of $\hat B_{\mathrm{m}}$. It is the same as $\hat B_{\mathrm{m}}$ but a factor
corresponding to the grade of the tree at that position in the vector is attached first, before applying $\hat B_{\mathrm{m}}$.

Second, we now explore new numerical simulation methods that can be constructed by approximating the general solution form~\eqref{eq:mainsoln} in Theorem~\ref{thm:main},
for general frequency kernels, as well as in particular, separable frequency kernels.
Consider the exponential tree series solution~\eqref{eq:mainsoln}, which is given in $\R\la\mathbb T\ra$, transformed
to the form~\eqref{eq:solnseriesapp} in $\Hb(\xi,\star)$, the algebra of convergent exponential tree series in the sense of condition~\eqref{eq:conv}.
Suppose we truncate~\eqref{eq:solnseriesapp} to only include trees of grade, up to and including, $N\in\mathbb N$. Denote this
truncated series by,
\begin{equation}\label{eq:approx}
\overline{\gf}_N=\sum_{|\tau|\leqslant N} \frac{t^{|\tau|}}{|\tau|!}\omega(\tau)\,\xi(\tau).
\end{equation}
Let $\mathbb S_N(\xi)\coloneqq\{\xi(\tau)\colon \tau\in\mathbb T~\text{and}~|\tau|\leqslant N\}$ denote the
set of tree-parametrised terms $\xi(\tau)$ in the truncated expansion~\eqref{eq:approx}.
We construct a new, simple numerical method, based on the truncated expansion~\eqref{eq:approx},
to integrate Smoluchowski's coagulation equation for a general frequency kernel $K$, over a global time interval $[0,T]$ for some $T>0$.
Herein, we suppose $T$ precedes any gelation time. The basic aspects we need to consider are:\smallskip

1. \emph{Time discretisation:} If $T$ is very small, we may not need to discretise at all. We might be able to simply evaluate
the set $\mathbb S_N(\xi)$ for the initial data $\xi$ only, for sufficiently large $N$. Then the solution is given by $\overline{\gf}_N$,
or in coagulation variables as ${\mathcal H}^{-1}\overline{\gf}_N$. However, in general, we need to discretise the interval $[0,T]$ into, say,
$M$ equal subintervals (for simplicity for here) so that $[0,T]=\cup_{m=0}^{M-1}[t_m,t_{m+1}]$, with $t_0=0$ and $t_M=T$.
Let $\rd t\coloneqq T/M$ denote the uniform time step.
We need to compute the set $\mathbb S_N$ anew at each time step $t_m$, $m\in\{1,\ldots,M-1\}$---in addition to computing it at the initial time $t_0=0$.
Since the computational effort for $\mathbb S_N$ is large for large $N$, there is a natural trade-off between the sizes of $M$ and $N$.
Determining the optimal trade-off is very much of interest.\smallskip

2. \emph{Evaluation of the expansion terms $\mathbb S_N$:} There are two components of this computational effort:\smallskip

(a)  \emph{Non-planar trees:} The `$\star$' product associated with Smoluchowski's equation~\eqref{eq:Smoluchowski} for the scalar field $g=g(x;t)$
is commutative and the exponential tree series solution~\eqref{eq:mainsoln} simplifies to an expansion over rooted non-planar binary trees.
See Remark~\ref{rmk:nonplanartrees} for the full details. This means that for example, the integrals represented by $\xi\star(\xi\star\xi)$ and $(\xi\star\xi)\star\xi$
are the same, and can be combined. At grade $3$, there are only two distinct non-planar binary trees. Thus for the commutative case, which includes
the original Smoluchowski equation for a general frequency kernel $K$, there is a very significant reduction in computation effort associated
with computing the set $\mathbb S_N$ at each time step $t_m$, $m\in\{0,\ldots,M-1\}$.\smallskip

(b)\emph{Generalised transform:} We opted for the Bernstein transform in Section~\ref{sec:coagulation}, but in particular circumstances,
another linear transformation---such as the modified Bernstein cosine transform---might significantly improve the effort associated with computing $\mathbb S_N$.
Indeed, in practice, the fast Fourier transform works extremely well.\smallskip

With regards the time discretisation in item 1, the local error associated with the truncated approximation~\eqref{eq:approx} which includes trees of grade $N$,
across any time interval $[t_{m},t_{m+1}]$, is $\mathcal O\bigl((\rd t)^{N+1}\bigr)$. Since this local error accumulates linearly at leading order over the global time
interval $[0,T]$, the global time discretisation error is $\mathcal O\bigl((\rd t)^N\bigr)$. This of course assumes we have evaluated the $\mathbb S_N$ sufficiently
accurately at each time step. With regards item~2, let us consider part (a) first; part (b) is implicitly realised further below.
Recall Remark~\ref{rmk:nonplanartrees} in Section~\ref{sec:coagulation}.
In Tables~\ref{table:treesupto3} and \ref{table:treesof4}, we list all the planar binary trees $\tau$ up to, and including, those of grade $4$, together with
their associated weights $\omega(\tau)$ and symmetries $\sigma(\tau)$. Recall that the symmetry $\sigma(\tau)$ associated with a non-planar tree determines
the number of non-planar copies, given by $2^{\sigma(\tau)}$, of itself it generates. Let us consider the triples $\bigl(\tau,\omega(\tau),\sigma(\tau)\bigr)$ 
of non-planar trees at each grade. Recall the word-coding representation we introduced in Procedure~\ref{proc:treeorder}. At grade $1$ there is
only one non-planar tree `\raisebox{-1pt}{{\tiny \begin{forest} for tree={grow'=90, parent anchor=center,child anchor=center, l=0cm,inner ysep=0cm,edge+=thick} [[][]] \end{forest}}}'
equivalent to the triple $(1,1,0)$; see Table~\ref{table:treesupto3}. The triple of the non-planar tree of grade $2$ is $(12,1,1)$, while
at grade $3$ there are only two non-planar trees with triples $(123,1,2)$ and $(212,2,0)$.
Note, for non-planar trees, we choose the \emph{convention} to use the planar tree with the lowest word-order coding as the non-planar representative tree.
The triples for all the non-planar trees of grades $4$, $5$ and $6$ are given in Table~\ref{table:nonplanartrees}.
In the Table, we can compute the weight associated with each tree from weights of lower grade trees, using the formula in Definition~\ref{def:weightchar}.
The symmetries $\sigma(\tau)$ are easily computed in the manner outlined in Remark~\ref{rmk:nonplanartrees}. Further, we note that
$\xi(\raisebox{-1pt}{{\tiny \begin{forest} for tree={grow'=90, parent anchor=center,child anchor=center, l=0cm,inner ysep=0cm,edge+=thick} [[][]] \end{forest}}})=\xi(1)$
is given by $\xi(1)=\xi(0)\star\xi(0)$,
where $\xi(0)=\xi({\tiny \begin{forest} for tree={grow'=90, parent anchor=center,child anchor=center} [] \path[fill=black] (.anchor) circle[radius=1.5pt]; \end{forest}})$,
or more abstractly $1=0\star0$. This is naturally just grafting. Thus we also have $\xi(12)=\xi\star\xi(1)$, or abstractly $12=0\star1$, and also we have, abstractly,
$123=0\star12$ and $212=1\star1$. All the non-planar binary trees up to, and including, grade $3$, are thus generated from lower grade non-planar trees in this way.
In Table~\ref{table:nonplanartrees} in the final column, we indicate how all the non-planar binary trees of grades $4$, $5$ and $6$ can be similarly generated.
Let $\Sb_N^{\mathrm{np}}$ denote the subset of $\Sb_N$ consisting of non-planar binary trees, using the representation convention outlined above.
The truncated binary tree expansion for non-planar binary trees has the form,
\begin{equation}\label{eq:approxnp}
\overline{\gf}_N=\sum_{\tau\in\Sb_N^{\mathrm{np}}} \frac{t^{|\tau|}}{|\tau|!}\omega(\tau)\,2^{\sigma(\tau)}\,\xi(\tau).
\end{equation}

\begin{table}
  \caption{We list all the non-planar binary trees of grades $4$, $5$ and $6$. For each word-coded tree in the left column,
    we give its weight $\omega(\tau)$ and symmetry $\sigma(\tau)$ in the second and third columns, and how it is
    generated from lower grade non-planar trees in the final column.}
\label{table:nonplanartrees}
\begin{center}
  \setlength{\baselineskip}{2\baselineskip}
\begin{tabular}{|c|c|c|c|}
\hline
$\phantom{\biggl|}$ tree $\phantom{\biggl|}$ & weight & symm.\/ & generation \\ \hline
1234 & 1 & 3 & $0\star123$\\
1323 & 2 & 1 & $0\star212$\\
2123 & 3 & 2 & $1\star12$\\\hline
12345 & 1 & 4 & $0\star1234$\\
12434 & 2 & 2 & $0\star1323$\\
13234 & 3 & 3 & $0\star2123$\\
21234 & 4 & 3 & $1\star123$\\
21323 & 8 & 1 & $1\star212$\\
23123 & 6 & 2 & $12\star12$ \\\hline
123456 & 1  & 5 & $0\star12345$\\
123545 & 2  & 3 & $0\star12434$\\
124345 & 3  & 4 & $0\star13234$\\
132345 & 4  & 4 & $0\star21234$\\
132434 & 8  & 2 & $0\star21323$\\
134234 & 6  & 3 & $0\star23123$\\
212345 & 5  & 4 & $1\star1234$\\
212434 & 10 & 2 & $1\star1323$\\
213234 & 15 & 3 & $1\star2123$\\
231234 & 10 & 4 & $12\star123$\\
231323 & 20 & 2 & $12\star212$ \\\hline
\end{tabular}
\end{center}
\end{table}

\begin{remark}[Algebra of word-codes] 
  In general, for \emph{planar} binary trees, we can define an algebra isomorphic to $\mathbb R\la\mathbb T\ra$, based on the
  word-codes in Procedure~\ref{proc:treeorder}. The product `$\star$' is this algebra would be $a_1\cdots a_n\star b_1\cdots b_m=(a_1+1)\cdots(a_{n}+1)1(b_1+1)\cdots(b_m+1)$,
  with the first simple cases taking the form $0\star0=1$, $0\star 1=12$, $1\star 0=21$, $0\star12=123$, $0\star21=132$, $1\star1=212$, and so forth.
\end{remark}

Let us now outline a practical approach to evaluating the set $\mathbb S_N^{\mathrm{np}}$ at each time step $t_m$, $m\in\{0,1,\ldots,M-1\}$. 
Given the details just above, we essentially need to compute,
\begin{equation}\label{eq:prodcompute}
 \bigl(\xi\star\eta\bigr)(s)=\int_{[0,\infty)^2}\!\! H(s,y,z)K(y,z)\,g(y)f(z)\,\rd y\,\rd z,
\end{equation}
for suitable pairs of functions $\xi$ and $\eta$. In this product, $\xi=\mathcal H g$ and $\eta=\mathcal H f$,
where $\mathcal H$ is the generalised transform with kernel $h$, as outlined in the Introduction and discussed in
Section~\ref{sec:coagulation}, and $H(s,y,z)\coloneqq\frac12(h(s,y+z)-h(s,y)-h(s,z))$. Let us immediately
restrict ourselves to the case when the frequency kernel is separable, in the the sense that,
\begin{equation}\label{eq:sepkernel}
 K(y,z)=k(y)k(z),
\end{equation}
where $k$ is a given function $k\colon[0,\infty)\to[0,\infty)$. One reason for this choice is to compare
our results to those in Filbert and Lauren\c cot~\cite{FL}.
We outline at the end of this section our strategy for more general frequency kernels.
The Bernstein transform we considered in Section~\ref{sec:coagulation} is a natural choice for the separable 
frequency kernel case as $H(s,y,z)=\frac12(1-\mathrm{e}^{-sy})(1-\mathrm{e}^{-sz})$, and in this case the
double integral in \eqref{eq:prodcompute} decouples to become the real product of two integrals.
We wish to retain this property and take advantage of the computational efficiency of the fast Fourier transform.
For convenience, we define the simple linear function $\ell\colon[0,\infty)\to[0,\infty)$ to be,
\begin{equation*}
\ell\colon x\mapsto x.
\end{equation*}    
Given the domain of integration in \eqref{eq:prodcompute} involves the positive quadrant, a natural choice 
is to suppose $h$ generates the modified Bernstein cosine transform, i.e.\/ we take $h(x,s)=2x\,\cos(2\pi sx)$.
With this choice,
\begin{align*}
  H(s,y,z)=&\;y\cos(2\pi sy)\,\bigl(\cos(2\pi sz)-1\bigr)\\
  &\;+\bigl(\cos(2\pi sy)-1\bigr)\,z\cos(2\pi sz)\\
  &\;-y\sin(2\pi sy)\,\sin(2\pi sz)\\
  &\;-\sin(2\pi sy)\,z\sin(2\pi sz).
\end{align*}
If we subsitute this form for $H$ and the separable frequency kernel form~\eqref{eq:sepkernel}
into the product~\eqref{eq:prodcompute}, it decomposes into the sum of four terms, each one
of which is the real product of two factors. The factors are either cosine, Bernstein cosine, or sine transforms
of $kg$, $kf$, $\ell kg$ or $\ell kf$. These arguments can be extended as even or odd functions on the whole
real line and thus the transforms in the factors can be expressed in terms of Fourier transforms of the extended arguments.
Hence we can in principle use the fast Fourier transform to compute all the factors described.
Note, for example, for the first term in binary tree expansion \eqref{eq:approxnp},
given initial data such as $g_0=\exp(-x)/x$, then in the first time step the expression $\xi\star\xi$ can be computed analytically.

In practice, using the fast Fourier transform to evaluate \eqref{eq:prodcompute} for separable frequency kernels
is even more straightforward. We use the following form for the Fourier transform $\mathfrak f$ of the function $f$
and its inverse, 
\begin{align*}
  \mathfrak f(s)&\coloneqq\int_{\R}\exp(2\pi\mathrm{i}sx)\,f(x)\,\rd x,\\
            f(x)&\coloneqq\int_{\R}\exp(-2\pi\mathrm{i}sx)\,\mathfrak f(s)\,\rd s.
\end{align*}
Let us also outline the discrete Fourier transform (DFT). Understanding how this works is crucial to the numerical simulation method we propose.
See Press \textit{et al.} \cite{PTVF} for more details. Suppose we are given a function $f$ on a finite
set of equispaced nodes $x_n\coloneqq \nu h$, $\nu=0,1,\ldots,n-1$. Here, $h$ is the discrete `spatial' scale.
Set $f_\nu\coloneqq f(x_\nu)$. Then the Fourier transform $\mathfrak f$ is approximated by,
\begin{equation}\label{eq:DFT}
\mathfrak f_k\approx h\sum_{\nu=0}^{n-1}\exp(2\pi\mathrm{i}\nu\,k/n)\,f_\nu,
\end{equation}
where $s_k=k/nh$, $k=-n/2,\ldots,n/2-1$, and $\mathfrak f_k\coloneqq\mathfrak f(s_k)$.
The fast Fourier transform (FFT) computes the sum on the right, dropping the $h$ prefactor.
The discrete inverse Fourier transform is given by,
\begin{equation}\label{eq:iDFT}
f_\nu=\frac{1}{n}\sum_{k=-n/2}^{n/2-1}\exp(-2\pi\mathrm{i}\nu\,k/n)\,\mathfrak f_k.
\end{equation}
For the FFT, the set of $n$ frequencies $k$ are reordered. For convenience we denote the Fourier transform of $f$ by $\mathcal F(f)$. 
Now suppose $h(x,s)\coloneqq x\exp(2\pi\mathrm{i}sx)$. Then in this case,
\begin{align}
  H(s,y,z)=&\;\tfrac12\Bigl(y\exp(2\pi\mathrm{i} sy)\,\bigl(\exp(2\pi\mathrm{i} sz)-1\bigr)\nonumber\\
  &\;+\bigl(\exp(2\pi\mathrm{i} sy)-1\bigr)\,z\exp(2\pi\mathrm{i} sz)\Bigr).\label{eq:Hform}
\end{align}
If we substitute this form for $H$ into \eqref{eq:prodcompute}, using the separable frequency kernel form~\eqref{eq:sepkernel},
we find that,
\begin{align}
  \xi\star\eta=&\;\tfrac12\Bigl(\mathcal F(\ell kg)\,\bigl(\mathcal F(kf)-\mathcal F_0(kf)\bigr)\nonumber\\
  &\;+\bigl(\mathcal F(kg)-\mathcal F_0(kg)\bigr)\,\mathcal F(\ell kf)\Bigr), \label{eq:discreteprod}
\end{align}
where $\mathcal F_0(f)$ denotes the Fourier transform evaluated at $s=0$. This result is not
correct unless we extend the arguments $kg$, $\ell kg$, $kf$ and $\ell kf$ as even or odd functions
on the whole real line. Given one of its original applications, signal processing, the DFT/FFT
is ideally suited to the situation we now find ourselves in.
In practice we are given initial data $g_0=g_0(x)$ for $x\in[0,\infty)$, and it is natural 
to sample this at the nodes $x_\nu\coloneqq \nu h$ for $\nu=0,1,\ldots,n-1$, where $h\coloneqq L/n$.
Here $[0,L]$, for $L>0$, is a sufficiently large truncation of the coagulation domain $[0,\infty)$.
Often we are given data that is singular at the origin, for example $g_0(x)=\exp(-x)/x$, and
in this instance we take $x_\nu=(\nu+1) h$, for $\nu=0,1,\ldots,n-1$. Indeed this is our
modus operandi herein. However, note that the DFT in \eqref{eq:DFT}, naturally  does not reference the 
actual nodal positions $x_\nu$, but utilises the $n$ function nodal values $f_\nu$ and that the nodes are equispaced.
It generates a discrete transform at the frequencies $k$, indicated above. When computing the inverse DFT via \eqref{eq:iDFT},
we only use the $n$ transform values $\mathfrak f_k$.
We are thus free to chose the equispaced nodes we wish to evaluate our original function at, thus determining its locale on the real line.
Naturally we choose the nodal points so the locale is in $[0,\infty)$, in particular we choose $x_\nu=(\nu+1) h$, for $\nu=0,1,\ldots,n-1$.
Thus \emph{in practice}, having sampled the functions $g=g(x)$ and $f=f(x)$ at the nodal points $x_\nu$, as well as $k$ and $\ell$,
we can replace the Fourier transforms $\mathcal F$ of the arguments $kg$, $\ell kg$, $kf$ and $\ell kf$ in \eqref{eq:discreteprod} by the DFT/FFT.

It remains to describe the overall algorithm, implement it for some poignant examples, and demonstrate how our method can be improved
and generalised to wider classes of frequency kernels $K$. To describe the overall algorithm, it suffices to show how the approximate solution 
is evaluated over the first time step, as subsequent time steps simply repeat the process. Let us consider the case when we use the
truncated non-planar binary tree expansion \eqref{eq:approxnp} for $N=3$, which generates a third order integrator in time. Given initial
data $g_0$, we thus need to compute,
\begin{align}
  \overline{\gf}_3=&\;\xi+(\rd t)\,\xi(1)+\frac{1}{2!}(\rd t)^2\,\xi(12)\nonumber\\
  &\;+\frac{1}{3!}(\rd t)^3\,\bigl(4\cdot\xi(123)+2\cdot\xi(212)\bigr). \label{eq:order3}
\end{align}
To compute $\xi=\xi(0)$, we simply compute the FFT of $g_0$, having sampled $g_0$ on the $n$ nodes $x_\nu=(\nu+1) h$, for $\nu=0,1,\ldots,n-1$.
For the first term on the right above, we actually don't need to do this. At the end of each time step we evaluate the approxomate solution $g=g(x,t_m)$
in the coagulation space, as explained presently.
Then to compute $\xi(1)=\xi\star\xi$ we use \eqref{eq:discreteprod} with the FFT. We simply need to compute $h$ times the FFT of $kg_0$ and $\ell kg_0$.
The term $\mathcal F_0(kg_0)$ is $h$ times the first element in the FFT vector of $kg_0$. We then compute the inverse fast Fourier transform (iFFT)
of $\xi(1)$, call this $g(1)$, as we need this in the next step. To compute $\xi(12)=\xi\star\xi(1)$, we use \eqref{eq:discreteprod} with $g$ given by $g_0$,
and $f$ given by $g(1)$. We then compute the iFFT of $\xi(12)$, nominate this as $g(12)$. Finally, to compute $\xi(123)=\xi\star\xi(12)$,
we use \eqref{eq:discreteprod} with $g$ given by $g_0$, and $f$ given by $g(12)$. To compute $\xi(212)=\xi(1)\star\xi(1)$
we use \eqref{eq:discreteprod} with $g$ and $f$ both given by $g(1)$. We then compute the iFFT of $4\cdot\xi(123)+2\cdot\xi(212)$.
We can then evaluate $\overline{g}_3$, in coagulation space, by considering the linear combination of the corresponding terms in \eqref{eq:order3}.
The approximation $\overline{g}_3$ is then the initial data corrsponding to $g_0$ for the next time step.
Using Table~\ref{table:nonplanartrees} the procedure for computing higher time-order approximations is now straightforward.
For the third order case we considered here, the total number of FFTs required, including the iFFTs, is ten. And of course
the effort of each such FFT operation is proportional to $n\log n$. For the separable frequency kernel case, this compares favourably with the   
numerical method of Filbet and Lauren\c cot~\cite{FL}. 

We implement six numerical approximations based on the truncated non-planar binary tree expansion \eqref{eq:approxnp}, as just described,
for $N\in\{1,2,3,4,5,6\}$, for different numbers of time steps $M$ on a global time interval $[0,T]$. In all cases the truncated `spatial' domain
is $[0,L]$ with $L=100$. We used $n=2^{20}$ modes to evaulate the required FFTs in all cases, except where stated otherwise. As in some
of the examples outlined in Filbet and Lauren\c cot~\cite{FL}, we set,
\begin{equation*}
k(x)\coloneqq x^{\lambda/2}.
\end{equation*}
We consider three separate cases, when the parameter $\lambda$ is set to be $\lambda=2$, $\lambda=3/2$ and $\lambda=2/3$.
The first case, $\lambda=2$, corresponds to the solvable multiplicative frequency kernel case. Indeed, for the initial data 
$g_0=\exp(-x)/x$ there is a well-known closed form solution given by,
\begin{equation}\label{eq:Bessel}
g(x;t)=\exp\bigl(-(1+t)x\bigr)\,I_1(2x\sqrt{t})/x^2\sqrt{t},
\end{equation}
for $t\leqslant1$, where $I_1$ is the modified Bessel function of the first kind. 
This solution extends beyond the gelation time $t=1$; see (ii) in Section~\ref{sec:discussion}.
In the top panel in Figure~\ref{fig:errors}, we give a log-log plot of global error versus 
the number of steps $M$ used to compute the approximation at time $T=0.5$, for all six cases $N\in\{1,2,3,4,5,6\}$.
The $L^2$ error was computed by comparing the approximations to the exact solution~\eqref{eq:Bessel}.
All the methods have the order anticipated. We used the same number of nodes $n=2^{20}$ for all the methods.
We observe that the higher order methods error curves flatten off around $10^{-3}$, this just an artifact
of the `spatial' discretisation error, determined by $n$, starts to exceed the time step error.

The second case, $\lambda=3/2$, also corresponds to a gelation case. For the same initial data $g_0(x)=\exp(-x)/x$,
we computed the solution up to time $t=0.9$ for all six integrators. In the middle panel in Figure~\ref{fig:errors},
we give a log-log plot of global error versus the number of steps $M$. 
The $L^2$ error was computed by comparing the approximations computed with $n=2^{20}$ modes versus the
sixth order approximation computed with the smallest step size and $n=2^{21}$ `spatial' modes.
This log-log error plot looks very much like that for the $\lambda=2$ case, and again the
flattenning off of the error curves for the higher order methods is just an artifact of the
`spatial' discretisation error exceeding the time step error. We give a log-log plot of
the solution in the left panel in Figure~\ref{fig:solution}.

The third case, $\lambda=2/3$, is non-gelling. For the initial data $g_0(x)=\exp(-x)$,
in the bottom panel in Figure~\ref{fig:errors}, we give a log-log plot of global error versus the number of steps $M$, computed to the time $T=1.5$.
The $L^2$ error was computed using the same procedure we outlined for the case $\lambda=3/2$ just above.
Again the log-log error plot has the same characteristics as the previous two cases above.
In the right panel in Figure~\ref{fig:solution}, we give a log-log plot of the solution computed to
the time $T=15$ in this case---using $M=2^7$ time steps. This compares well with the corresponding
plot in Filbet and Lauren\c cot~\cite{FL}, namely in Figure~11, in the lower left panel therein.
Overall, our numerical method based on the truncated non-planar binary tree expansion \eqref{eq:approxnp} and FFT computations
appears robust and efficient.

\begin{figure}
  \begin{center}
  \includegraphics[width=9.5cm,height=7cm]{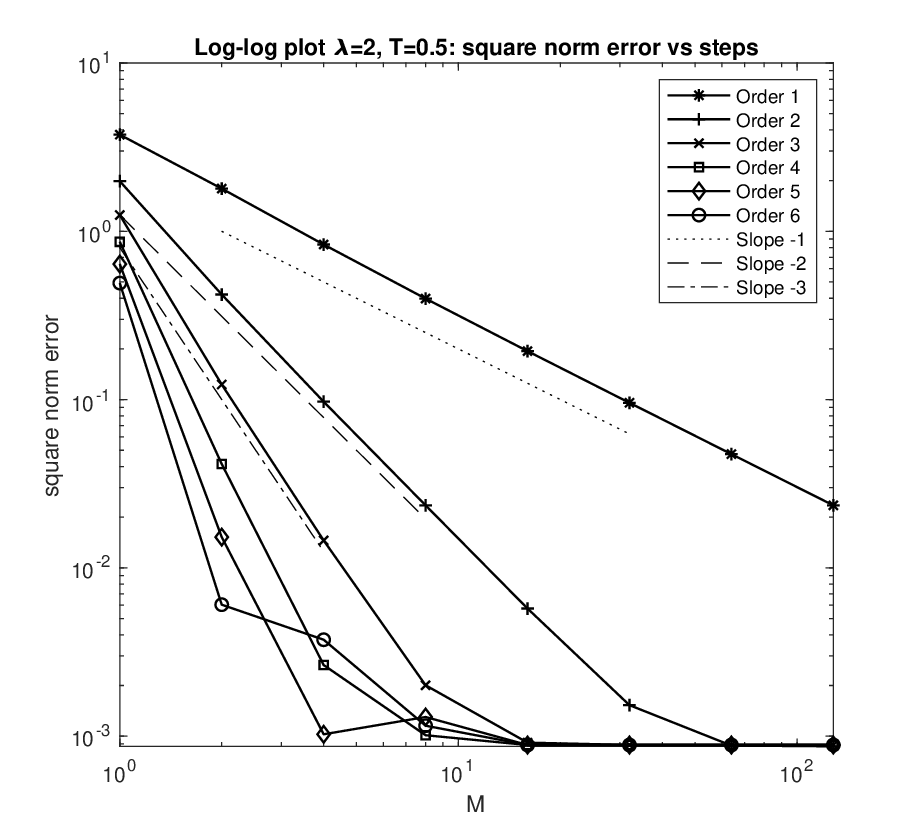}\\
  \includegraphics[width=9.5cm,height=7cm]{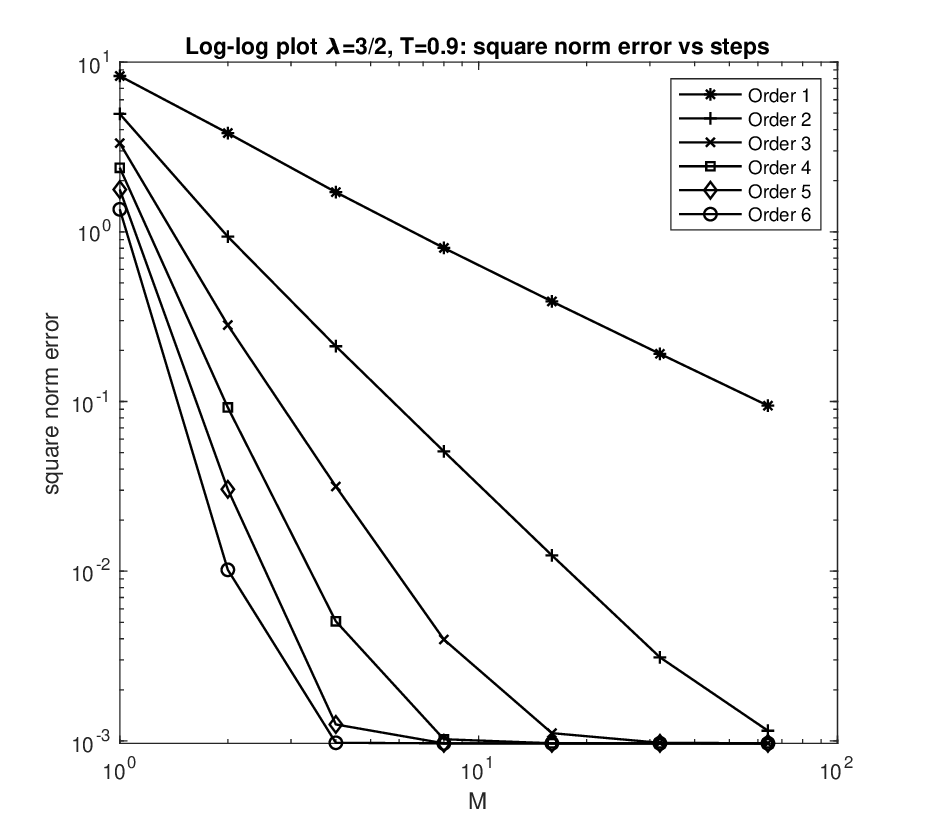}\\
   \includegraphics[width=9.5cm,height=7cm]{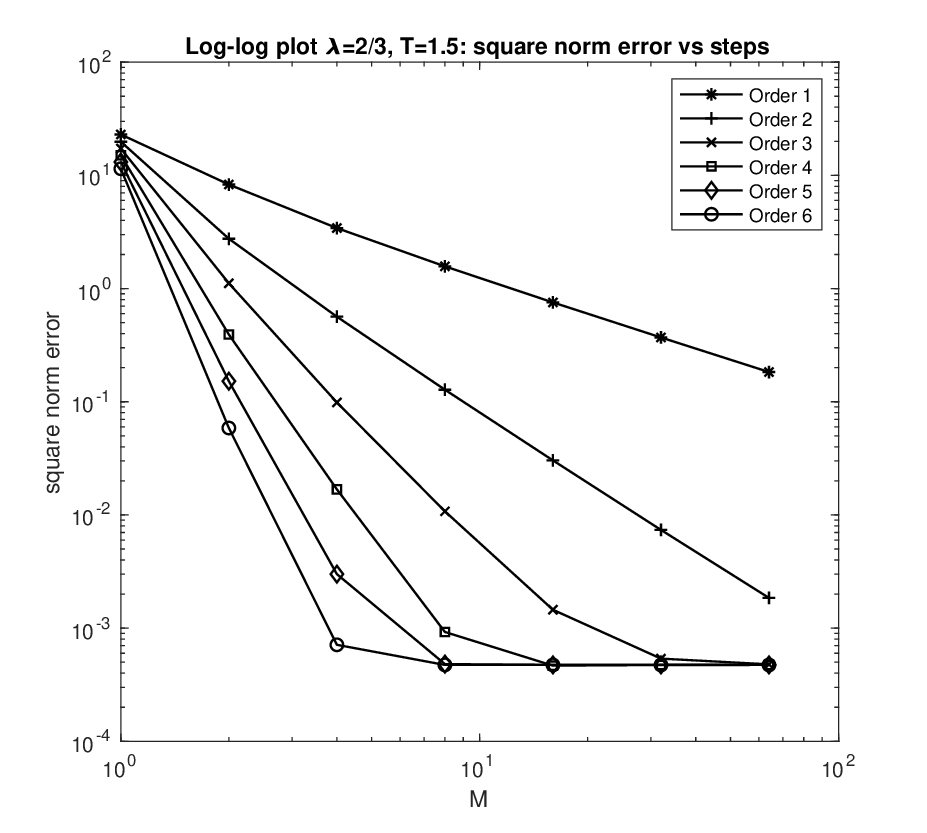}
\end{center}
  \caption{Log-log plots of the mean square error versus the number of time steps are shown in the
    top panel ($\lambda=2$, $T=0.5$), the middle panel ($\lambda=3/2$, $T=0.9$) and the bottom panel ($\lambda=2/3$, $T=1.5$).}
\label{fig:errors}
\end{figure}

\begin{figure}
  \begin{center}
  \includegraphics[width=9.5cm,height=7cm]{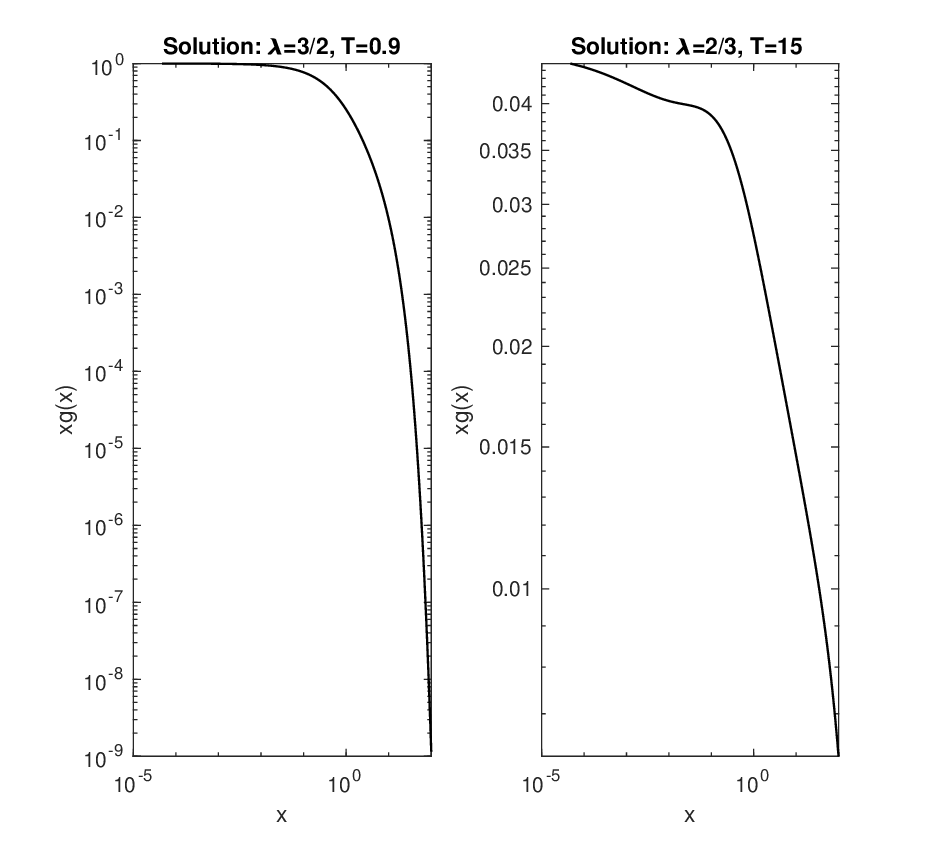} 
\end{center}
  \caption{The approximate solution for the cases $\lambda=3/2$, $T=0.9$ (left) and $\lambda=2/3$, $T=15$ (right) are shown,
    both computed using $2^{21}$ modes with $2^7$ timesteps.}
\label{fig:solution}
\end{figure}

The method we have outlined can be improved and generalised in several ways as follows.\smallskip

1. \emph{Non-uniform fast Fourier transform:} The FFT computations above relied on the nodal points $x_\nu$ being equispaced
across the domain $[0,L]$, where $L=100$. However, for all the cases we considered, for large $x$, the initial data decays like `$\exp(-x)$'.
It would therefore seem sensible to distribute the nodal points to this `distribution', so that more nodal points are concentrated around smaller values of $x$. 
The non-uniform fast Fourier transform (NUFFT) does indeed exist and the effort required is still proportional to $n\log n$,
though with a larger constant. In its implementation in our context here, for an exponential distributed set of nodes $x_\nu$, we would compute the NUFFT
on the same set of frequencies as above. For a different initial data, we may wish to choose a differently distributed set of nodes $x_\nu$.
The inverse NUFFT can also be computed, though this requires some additional steps. For more details, see Kircheis and Potts~\cite{KP}.
In particular, once implemented, we could in principle, evaluate the solution for very small nodal points, eg. $10^{-40}$,
as in Filbet and Lauren\c cot~\cite{FL}.\smallskip

2. \emph{General additive kernels $K(x,y)=k(x)+k(y)$:} The method we have outlined is straightforwardly adapted to this class of
frequency kernels, which were also considered by Filbet and Lauren\c cot~\cite{FL}.\smallskip

3. \emph{General kernels:} The method we have outlined is easily adapted to any frequency kernels $K=K(y,z)$ consisting of a finite linear combination
of the forms $k_i(y)k_j(z)$, for $i,j=1,\ldots,l$, and such that $k_i(z)k_j(y)=k_i(y)k_j(z)$.
However, when $l$ is large, or if $K$ cannot be expressed as such a finite linear combination, then we might be able to proceed as follows.
We set $h(x,s)=x\exp(2\pi\mathrm{i} sx)$ as above. Then given $g$ and $f$, computing the product \eqref{eq:prodcompute},  
with $H$ given by \eqref{eq:Hform}, amounts to computing the two-dimensional Fourier transform, or two-dimensional FFT, along the diagonal set of wavenumbers.
In principle, we require the one-dimensional iFFT of the resulting expression for $\xi\star\eta$, in order to implement our method, as outlined
after \eqref{eq:order3}.

\section{Discussion}\label{sec:discussion}
There are many connections and extensions to the material herein. A brief summary of examples is as follows.

(i) \emph{Grassmannian flow:} In Doikou \text{et al.\/} \cite{DMSW-coagulation} we outlined in detail how the constant, additive and multiplicative frequency
kernels can be considered as Grassmannian flows. In principle, we can extend these cases to the general frequency kernel case, i.e.\/
the solution flow~\eqref{eq:mainsoln}--\eqref{eq:matchsoln} can formally be formulated as a Grassmannian flow as follows.
Recall the vector space $\Vb$ from Section~\ref{sec:simulation}. 
We now construct the Grassmannian $\mathrm{Gr}(\Vb\oplus\Vb^\perp;\Vb)$, the set of all subspaces $\mathbb W$ of $\Vb\oplus\Vb^\perp$ such that:
(1) the orthogonal projection $\mathrm{pr}\colon\mathbb W\to\Vb$ is a Fredholm operator, indeed a Hilbert--Schmidt perturbation of the identity;
(2) the orthogonal projection $\mathrm{pr}\colon\mathbb W\to\Vb^\perp$ is a Hilbert--Schmidt operator. See Pressley and Segal~\cite{PS}.
Herein we assume $\Vb^\perp$ is isomorphic and isometric to $\Vb$. Any such subspace $\mathbb W$ has a representation of the form, $\mathcal W=(\mathcal Q; \mathcal P)$,
where $\mathcal Q$ is a Fredholm operator on $\Vb$ which is a Hilbert--Schmidt perturbation of the identity,
and $\mathcal P$ is a Hilbert--Schmidt operator on $\Vb$. Let $\Vb_0$ denote the canonical subspace with the representation, $\mathcal V_0=(\id; O)$,
where $O$ is the infinite matrix of zeros. The projections $\mathrm{pr}\colon\mathbb W\to\Vb_0$ and $\mathrm{pr}\colon\mathbb W\to\Vb_0^\perp$ 
respectively give, $\mathcal W^\parallel\coloneqq(\mathcal Q; O)$ and $\mathcal W^\perp\coloneqq(O; \mathcal P)$.
These projections are possible provided $\mathrm{det}_2\,\mathcal Q\neq0$.
The subspace spanned by the columns of $\mathcal W^\parallel$ coincides with $\Vb_0$,
and indeed, $\mathcal Q^{-1}$ transforms $\mathrm{span}\{\mathcal W^\parallel\}$ to $\Vb_0$.
Under this transformation, the representation $\mathcal W$ for $\mathbb W$ becomes, $(\id; \mathcal G)$,
where $\mathcal G=\mathcal P\,\mathcal Q^{-1}$. Any subspace that can be projected onto $\Vb_0$ can be represented in this way and vice-versa.
The Hilbert--Schmidt operators $\mathcal G$ parametrise all subspaces $\mathbb W$ that can be projected in this way. If $\mathrm{det}_2\,\mathcal Q=0$,
a different coordinate patch must be chosen. For more details, see Pressley and Segal~\cite{PS} or Doikou \textit{et al.\/} \cite{DMSW-integrable},
as well as Beck \textit{et al.\/} \cite{BDMS1,BDMS2}. 
A possible Grassmannian flow prescription for our flow herein, corresponding to the linearised flow Prescription~\ref{prescription:linearisedflow},
is the linearised flow Prescription~\eqref{prescription:Grassmannian} for the linear operators $\mathcal P$, $\mathcal Q$ and $\mathcal G$.
This approach needs further investigation. In particular, an important aspect of Grassmannian flows in a given coordinate patch, is that flow singularities 
correspond to a poor representative patch, where $\mathrm{det}_2\mathcal Q=0$, but the solution flow can still be represented and continued in a different coordinate patch.
This is relevant to the following item on gelation. 

(ii) \emph{Computing beyond gelation:} Quoting from van Roessel and Shirvani~\cite{vRS}: ``The phenomenon whereby conservation of mass breaks down in finite time is known as gelation and is
physically interpreted as being caused by the appearance of an infinite `gel' or `superparticle'...''. In turn, also see Ernst \textit{et al.} \cite{EZH}.
Herein, for frequency kernels for which gelation occurs, we have focused on computing the solution up to the time of gelation.
During this interval of time, the quantity $M_t\coloneqq\int_0^\infty x\,g(x;t)\,\rd x$ is conserved by the flow.
However, post-gelation, quoting from Normand and Zambotti~\cite{NZ},
the superparticles ``do not count in the computation of the mass so from the gelation time on, $M_t$ starts to decrease''.
Indeed the solution can be computed, see for example, Leyvraz and Tschudi~\cite{LT}, van Roessel and Shirvani~\cite{vRS} and Normand and Zambotti~\cite{NZ}.
Using the binary tree expansion to compute the solution $\gf$, analytically and/or numerically, post-gelation, is very much of interest.
Indeed a Grassmannian flow context would seem to be a natural one to encapsulate the bahaviour before, during and after such a phase transition,
via different coordinate patch representations.

(iii) \emph{Kingman coalescent and Galton--Watson processes:} Smoluchowski coagulation models are naturally represented in terms of
coalescent stochastic processes, in particular Kingman coalescent processes or Galton-Watson branching processes (reversed in time).
See, for example, Aldous~\cite{Aldous}, Iyer \textit{et al.\/} \cite{ILP}, Harris \textit{et al.\/} \cite{HJR} and Johnston \textit{et al.\/} \cite{JKR}.
Etheridge~\cite[Ch.~2]{Etheridge} demonstrates the following insightful connection for the solution of a classical
quadratically semilinear parabolic partial differential equation. The solution can be expanded deterministically 
by an iterative procedure akin to that we performed for the coagulation equation~\eqref{eq:abstract}. The terms in
the solution expansion are indexed by rooted planar binary trees. Etheridge then shows that, at a given time, the terms at each grade
are given by the expectation across branches of a given branching process---by analogy with McKean's~\cite{McKeanII} solution
of the Kolmogorov--Petrovski--Piskunov equation via branching Brownian motion. It is thus natural to try to establish such a connection between
each term of the rooted planar binary tree expansion~\eqref{eq:mainsoln} and the branches of the
underlying Galton--Watson process, reversed in time. 

(iv) \emph{Multiple mergers:} A natural extension of the Smoluchowski coagulation model is to coalescent phenomena
involving multiple mergers. In principle the solution in such cases can be similarly expanded as an exponential trees series
analogous to~\eqref{eq:mainsoln} indexed by planar trees, except now the set of indexing planar trees would be more general
and could include the complete collection of $n$-ary planar trees reflecting the class of merger coalesence included.
See, for example, Iyer \textit{et al.\/} \cite{ILP} for more details on Smoluchowski models with multiple coalescence.
Also see Doikou \textit{et al.\/} \cite[Sec.~3]{DMSW-coagulation}.

(v) \emph{Decorated trees:} We considered a non-commutative, non-associative algebra with one generator and the non-associative product `$\star$'.
Naturally in general, we can construct such algebras with more than one generator. In this instance we can use the set of decorated binary
trees to represent the monomials in such an algebra---or in the multiple merger case, just decorated trees.
In the decorated binary tree context, we could extend the action of the budding operator $\mathcal B$ as follows.
For example, suppose there are three generators $\xi$, $\eta$ and $\zeta$ in $\Hb$, respectively represented by `$\circ$', `$\bullet$' and `$\diamond$' in $\mathbb T$. Then we could define,
\begin{equation*}
\mathcal B_\circ(\bullet)=\raisebox{-3pt}{{\tiny \begin{forest} for tree={grow'=90, parent anchor=center,child anchor=center, l=0cm,inner ysep=0pt,edge+=thick, s sep=0mm} [[$\circ$][$\bullet$]] \end{forest}}}
\qquad\text{and}\qquad 
\mathcal B_\circ\bigl(\raisebox{-3pt}{{\tiny \begin{forest} for tree={grow'=90, parent anchor=center,child anchor=center, l=0cm,inner ysep=0pt,edge+=thick, s sep=0mm} [[$\bullet$][$\diamond$]] \end{forest}}}\bigr)
=\raisebox{-4pt}{{\tiny \begin{forest} for tree={grow'=90, parent anchor=center,child anchor=center, l=0cm,inner ysep=0cm,edge+=thick} [[[$\circ$][$\bullet$]][$\diamond$]] \end{forest}}}
+\raisebox{-4pt}{{\tiny \begin{forest} for tree={grow'=90, parent anchor=center,child anchor=center, l=0cm,inner ysep=0cm,edge+=thick} [[$\bullet$][[$\circ$][$\diamond$]]] \end{forest}}}.
\end{equation*}
Also recall Remark~\ref{rmk:extend}, we could also consider extending this to include $\mathcal B_{\tau}$, for any $\tau\in\mathbb T$, which might
model multiple stage reactions depending on the tree attached. Note how $\mathcal B$ is left-budding---for each ``free'' branch we extend the existing bud to the right and attach
the new one to the left. Naturally there is a right-budding version as well.

(vi) \emph{Species:} The coagulation equation~\eqref{eq:abstract}, whose solution~\eqref{eq:mainsoln} is expressed as an exponential series in rooted planar binary
trees, can be expressed as an equation in the species of rooted planar binary trees. The extensions mentioned in (iii) above to more general coalescent processes are,
in principle, examples of further species of structures. See Bergeron \textit{et al.\/} \cite{Bergeron} for more details on species.

(vii) \emph{Multi-component coagulation:} There are multi-component generalisations of Smoluchowski's coagulation model \eqref{eq:Smoluchowski},
in particular in the context of atmospheric sciences, where ``clusters can be formed by different types of particles''; see Throm~\cite{Throm}.
Adapting our binary tree expansion approach to such generalisations appears to be straightforward, and again, very much of interest.

(viii) \emph{Hopf algebras of trees:} We mentioned in the introduction the depth and wide ranging applications of algebras of planar trees.
If the algebra of planar trees is endowed with the grafting product, one can define a compatabile co-product, and an antipode, and thus establish a Hopf algebra of such trees;
see Loday and Ronco~\cite{LR}. We mention, due to their more direct relevance here, the use of rooted planar trees in Lie group methods and backward error analysis in
Munthe--Kaas and Wright~\cite{M-KW} and Lundervold and Munthe--Kaas~\cite{LM-Kbackward,LM-K}.
Indeed, there is more than one Hopf algebra of planar trees, see Calaque \textit{et al.\/}~\cite{CE-FM}.
One co-product that would be useful in our analysis would be that which, for a given planar binary tree $\tau$,
it additively enumerates all the possible pairs of trees that when root grafted together, generate $\tau$.
Thus for example, for a given general tree series expansion with terms $\hat\ff_\tau\cdot\tau$, such a coproduct helps keep track of the origin of coefficients
that are generated by root grafting two such series together. This means that, in principle, there is an equivalent formulation of~\eqref{eq:mainsoln} 
as a co-product expansion. And, in principle, this could lead to a more abstract formulation of~\eqref{eq:mainsoln} in terms of Hopf algebra
endomorphisms, by analogy with such expansions in Ebrahimi--Fard \textit{et al.\/} \cite{E-FLMM-KW} and Ebrahimi--Fard \textit{et al.\/} \cite{E-FMPW}.
Lastly, we also mention here the work by Ishida~\cite{Ishida} on the Lie algebra of rooted planar trees,
Chapoton~\cite{Chapoton} on exponential-like series and Gerritzen~\cite{Gerritzen} on non-associative exponential series.

(ix) \emph{Free pre-Lie algebra:} Al Kaabi~\cite{AlKaabi} considers the free pre-Lie algebra structure associated with rooted planar trees.
The construction of numerical algorithms in the free pre-Lie algebra context is very much of interest.

(x) \emph{Branching Brownian motion and colloids:} Smoluchowski diffusion models incorporate the spatial Brownian motion
of clusters. They have applications in theory of colloids and sedimentation.
In such models, the density $g=g(x,\zeta;t)$ is recorded at position $\zeta$. A branching Brownian motion can be considered
as a ``Gaussian process indexed by the leaves of a Galton--Watson process''---Bovier~\cite{Bovier}. Viewed backwards in time
we observe a diffusive coalescent, and Smoluchowski diffusion models can be interpreted in this light; see Harris \textit{et al.\/} \cite{HJR}.
Can we extend the connection in (iii) above, here between the deterministic interative solution expansion and the underlying diffusive coalescent process,
in this case? See Etheridge~\cite{Etheridge} and Dynkin~\cite{Dynkin} for more details of the diffusive branching case, and
Berestycki and Berestycki~\cite{BB} and Berestycki \textit{et al.\/} \cite{Berestycki} for more details of the diffusive coalescent case.





\section{Declarations}

\subsection{Acknowledgement}
The author would like to thank the referees for their positive reports and suggestion to implement the numerical scheme.
This directly led to the material in the latter half of Section~\ref{sec:simulation} and helped to significantly improve the original manuscript.
The author is also very grateful to one of the referees for bringing \cite{LT} to his attention.

\subsection{Funding and conflicts or competing interests}
SJAM was supported by an EPSRC Mathematical Sciences Small Grant EP/X018784/1. 
There are no conflicts of interests or competing interests. 

\subsection{Data availability statement}
No data was used in this work. All the Matlab codes are provided in the electronic supplementary material.

\end{document}